\title{A measure of intelligence of an approximation of a real number in a given model}
\author{\sc Bakir FARHI \\
National Higher School of Mathematics \\
P.O.Box 75, Mahelma 16093, Sidi Abdellah (Algiers) \\
Algeria \\[1mm]
\href{mailto:bakir.farhi@nhsm.edu.dz}{\tt bakir.farhi@nhsm.edu.dz} \\[1mm]
\url{http://farhi.bakir.free.fr/}
}
\date{}
\let\epsilon=\varepsilon
\def\R{{\mathbb R}}
\def\Q{{\mathbb Q}}
\def\N{{\mathbb N}}
\def\Z{{\mathbb Z}}
\def\s{{\mathbf{s}}}     %%%% pour size
\def\M{\mathfrak{M}}
\def\idem{\leavevmode\hbox to 10.6mm{\vrule height .63ex depth -.59ex
    width 10mm\hfill}}
\theoremstyle{plain}
\numberwithin{equation}{section}
\newtheorem{thm}{Theorem}[section]
\newtheorem{prop}[thm]{Proposition}
\newtheorem{coll}[thm]{Corollary}
\theoremstyle{definition}
\newtheorem{defi}[thm]{Definition}
\theoremstyle{remark}
\newtheorem{rmk}[thm]{Remark}
\newtheorem{rmks}[thm]{Remarks}
\newtheorem{expls}[thm]{Examples}
\begin{document}
\maketitle

\begin{abstract}
In this paper, we introduce a way to measure the intelligence (or relevance) of an approximation of a given real number in a given model of approximation. Based on the notion of complexity of a number, defined as the number of its digits (in a given base), we introduce a function noted $\mu$ (called a measure of intelligence) that associates to any approximation $\mathbf{app}$ of a given real number in a given model a positive number $\mu(\mathbf{app})$, which measures the quality of that approximation. More precisely, an approximation $\mathbf{app}$ is deemed intelligent if and only if $\mu(\mathbf{app}) \geq 1$. We illustrate the theory with several numerical examples and apply it to the rational model. In this case, we show that it is consistent with the classical theory of rational Diophantine approximation. We conclude by stating an open problem, namely whether any real number can be intelligently approximated in a given model for which it is a limit point.    
\end{abstract}
\noindent\textbf{MSC 2020:} 11Jxx. \\
\textbf{Keywords:} Diophantine approximation, Approximation by rational numbers, Continued fractions, Measures of irrationality, Metric theory.

\section{Introduction}
Throughout this paper, $\N$ denotes the set of positive integers and $\N_0 := \N \cup \{0\}$ the set of non-negative integers. We denote by $\log$ the natural logarithm function. On the other hand, for convenience, we use the standard notation for regular continued fraction: we denote a finite regular continued fraction 
$$
a_0 + \dfrac{1}{a_1 + \dfrac{1}{a_2 + {}_{\ddots + \dfrac{1}{a_n}}}}
$$ 
by $[a_0 ; a_1 , a_2 , \dots , a_n]$ and we denote an infinite regular continued fraction
$$
a_0 + \dfrac{1}{a_1 + \dfrac{1}{a_2 + {}_{\ddots}}}
$$
by $[a_0 ; a_1 , a_2 , \dots]$ (where $a_0 , a_1 , \dots \in \R$). If an infinite regular continued fraction $[a_0 ; a_1 , a_2 , \dots]$ is eventually periodic, precisely if there exist $k \in \N_0$ and $d \in \N$ such that for any $n \geq k$, we have $a_{n + d} = a_n$, then we denote it (as usually) by
$$
[a_0 ; a_1 , a_2 , \dots , a_{k - 1} , \overline{a_k , a_{k + 1} , \dots , a_{k + d - 1}}] .
$$ 

More than two thousand years ago, mathematicians and calculators were interested in approximating some important real numbers by the values of some type of functions with integer variables. For instance, well-known approximations of $\pi$ include $\frac{22}{7}$ (due to Archimedes), $\sqrt{2} + \sqrt{3}$, and $\sqrt{\frac{40}{3} - 2 \sqrt{3}}$ (given by Kocha\'nski \cite{koch}). All these approximations are recognized as \emph{interesting} (or \emph{intelligent}) yet, to date, no rigorous mathematical meaning has been assigned to the word ``interesting''! For example, our intuition tells us that the approximation $\pi \approx \frac{22}{7}$ is more interesting (or more intelligent) than $\pi \approx \frac{314159}{100000}$ even though the latter approximation is more accurate than the first one. In this particular example, this preference can be explained by the fact that the size of the approximation $\frac{22}{7}$ of $\pi$ is much smaller than that of $\frac{314159}{100000}$ (where the size of a rational number $\frac{a}{b}$, with $a , b \in \Z$, $b \neq 0$ and $\mathrm{gcd}(a , b) = 1$, can be taken equal to $\max(|a| , |b|)$). A clearer explanation is that there is no rational approximation of $\pi$ that is better than $\frac{22}{7}$ with a positive denominator not exceeding $7$. An additional reason is that $\frac{22}{7}$ is a convergent of the regular continued fraction expansion of $\pi$. However, the approximation $\pi \approx \frac{314159}{100000}$ is naive, since a better rational approximation of $\pi$ exists with a smaller denominator (for example $\pi \approx \frac{355}{113}$). From this example, we see that the \emph{intelligence} (or the interest) of an approximation of a real number is a balanced combination of the following two features:
\begin{quote}
\begin{enumerate}
\item[(i)] The size of the approximation (or its simplicity).
\item[(ii)] The accuracy of the approximation.
\end{enumerate}
\end{quote}    
The accuracy of an approximation of a given real number $x$ (say $x \approx \alpha$) is characterized by its error, which is equal to $|x - \alpha|$. The size of an approximation can also be naturally defined once a \emph{model of approximation} is fixed (see below). For example, for the \emph{rational model}, the size of the approximation $x \approx \frac{p}{q}$ (where $p , q \in \Z$, $q \neq 0$) can be defined by $\s(x \approx \frac{p}{q}) = |p q|$. What remains to be made precise is how these two features (i) and (ii) are combined. The objective of this paper is to make this combination rigorous. We then define a \emph{measure of intelligence} of an approximation of a given real number (depending on a model) such that: 
\begin{itemize}
\item If it is $< 1$, the approximation is naive;
\item If it is $\geq 1$, the approximation is intelligent.
\end{itemize}
In addition, more the \emph{measure of intelligence} of an approximation is greater more that approximation is intelligent.

\section{Models of approximation}
\begin{defi}
A \emph{model of approximation} is a map $\M : A \subset \Z^{*n} \rightarrow \R$ (where $n$ is a positive integer). The integer inputs of $\M$ are called its \emph{parameters} and we say that $\M$ is a model of $n$ parameters.
\end{defi}

\begin{expls}
The following expressions:
$$
\begin{array}{rl}
\displaystyle \frac{a}{b} ~, & \hspace*{4cm} (\M_1) \\[4mm]
\displaystyle a + b \sqrt{2} ~, & \hspace*{4cm} (\M_2) \\[4mm]
\displaystyle a + b \sqrt{c} ~, & \hspace*{4cm} (\M_3) \\[4mm]
\displaystyle \frac{a}{b + c \log{2} + d \log{3}} & \hspace*{4cm} (\M_4)
\end{array}
$$~\\[-3mm]
(where $a , b , c , d \in \Z^*$) define models of approximation. We refer to $\M_1$ as the \emph{rational model}; it is a model with two parameters. The model $\M_2$ is also with two parameters; while $\M_3$ is a model with 3 parameters and $\M_4$ is a model of 4 parameters.
\end{expls}

\begin{defi}
Let $\M$ be a model of approximation of $n$ parameters ($n \in \N$) and let $x$ be a real number. We say that $x$ is \emph{representable} in $\M$ if $x$ belongs to the image of $\M$. Furthermore, we say that an approximation of $x$ \emph{lies} in $\M$ if it has the form: $x \approx \M(a_1 , a_2 , \dots , a_n)$ (where $a_1 , a_2 , \dots , a_n \in \Z^*$).
\end{defi}

In the above examples, we observe that any approximation lying in $\M_2$ also lies in $\M_3$. In this case, we say that the model $\M_3$ is finer than the model $\M_2$; or equivalently, the model $\M_2$ is coarser than the model $\M_3$. More precisely, we have the following definition:

\begin{defi}
Let $\M$ and $\M'$ be two models of approximation and let $n$ and $n'$ be respectively their number of parameters. We say that $\M'$ is finer than $\M$ (or equivalently $\M$ is coarser than $\M'$) if the image of $\M$ is contained in the image of $\M'$.
\end{defi}

\section[Size and logarithmic size of an approximation]{The size and the logarithmic size of an approximation lying in a given model}

\begin{defi}
Let $\M$ be a model of approximation of $n$ parameters ($n \in \N$) and let $x$ be a real number. We define \emph{the size} of an approximation $x \approx \M(a_1 , a_2 , \dots , a_n)$ of $x$ in the model $\M$ by:
$$
\s\left(x \approx \M(a_1 , a_2 , \dots , a_n)\right) ~:=~ \left\vert a_1\right\vert \times \left\vert a_2\right\vert \cdots \times \left\vert a_n\right\vert .
$$
Accordingly, we define the \emph{logarithmic size} of the same approximation by:
$$
\s_{\log}\left(x \approx \M(a_1 , a_2 , \dots , a_n)\right) := \log\s\left(x \approx \M(a_1 , a_2 , \dots , a_n)\right) = \log{|a_1|} + \log{|a_2|} + \dots + \log{|a_n|} .
$$
Observe that $\s\left(x \approx \M(a_1 , a_2 , \dots , a_n)\right) \in \N$ and $\s_{\log}\left(x \approx \M(a_1 , a_2 , \dots , a_n)\right) \in [0 , + \infty)$. \\[1mm]
--- If $\s_{\log}\left(x \approx \M(a_1 , a_2 , \dots , a_n)\right) \neq 0$ (i.e., if not all parameters $a_i$ satisfy $a_i = \pm 1$), we say that the approximation $x \approx \M(a_1 , a_2 ,  \dots , a_n)$ of $x$ is \emph{admissible}.
\end{defi}

\section[Measure of intelligence of an approximation]{A measure of intelligence of an approximation lying in a given model}

Let $\M$ be a model of approximation of $n$ parameters ($n \geq 1$) and let $x$ be a nonzero real number not representable in $\M$ (i.e., $x \not\in \M(\Z^{*n})$). Let
\begin{equation}\label{eq0}
x ~\approx~ \M(a_1 , a_2 , \dots , a_n) \tag{$*$}
\end{equation}
(where $a_1 , a_2 , \dots , a_n \in \Z^*$) be an admissible approximation of $x$ lying in $\M$. We now define the measure of intelligence of \eqref{eq0} and determine the criterion that makes it intelligent. To do so, we argue on the digits of real numbers in some basis of numeration. We work in the decimal numeration system, but we will see shortly that our result is independent of this choice. The key idea behind our definition of the intelligence of an approximation is as follows:
\begin{quote}
The approximation \eqref{eq0} is intelligent if each digit of each of the integers $a_1 , a_2 , \dots , a_n$ contributes to producing at least one correct digit of $x$ (before or after the decimal point) in the approximation \eqref{eq0}. In other words, the approximation \eqref{eq0} is intelligent if the total number of digits of $a_1 , a_2 , \dots , a_n$ does not exceed the number of correct digits of $x$ (before and after the decimal point) that the approximation \eqref{eq0} gives. \\
If \eqref{eq0} is not intelligent, we say that it is naive.
\end{quote}

Since the number of digits of a positive integer $a$ is approximately\footnotemark[1] equal to $\frac{\log a}{\log{10}}$, \linebreak then the sum of the number of digits of the integers $a_1 , a_2 , \dots , a_n$ is approximately equal to
$$
\sum_{i = 1}^{n} \frac{\log{|a_i|}}{\log{10}} ~=~ \frac{\log|a_1 a_2 \cdots a_n|}{\log{10}} .
$$
Note that, up to a multiplicative constant, this quantity is precisely the logarithmic size of the approximation \eqref{eq0}.

Similarly, the number of digits before the decimal point of $x$ (i.e., the number of digits of the integer part of $|x|$) is approximately\footnotemark[1] equal to
$$
\frac{\log{|x|}}{\log{10}} .
$$ 
Further, the number of the correct digits after the decimal point that the approximation \eqref{eq0} of $x$ gives is approximately\footnotemark[1] equal to
\footnotetext[1]{In fact, these approximations are exact if we adopt the heuristic of a non-integer (resp. non-positive) number of digits}
$$
- \, \frac{\log{|x - \M(a_1 , a_2 , \dots , a_n)|}}{\log{10}} .
$$
Thus, according to our criterion above, the approximation \eqref{eq0} is intelligent if and only if we have:
$$
\frac{\log|a_1 a_2 \cdots a_n|}{\log{10}} ~\leq~ \frac{\log{|x|}}{\log{10}} - \frac{\log{|x - \M(a_1 , a_2 , \dots , a_n)|}}{\log{10}} ;
$$
that is, if and only if
$$
\frac{\log{|x|} - \log{|x - \M(a_1 , a_2 , \dots , a_n)|}}{\log|a_1 a_2 \cdots a_n|} ~\geq~ 1 .
$$
We note in passing that this condition is independent of the choice of the decimal base. This motivates the following definition:

\begin{defi}
Let $\M$ be a model of approximation of $n$ parameters ($n \geq 1$) and let $x$ be a nonzero real number not representable in $\M$ (i.e., $x \not\in \M(\Z^{*n})$). Let also
\begin{equation}
x \approx \M(a_1 , a_2 , \dots , a_n) \tag{$*$}
\end{equation} 
(where $a_1 , a_2 , \dots , a_n \in \Z^*$) be an admissible approximation of $x$ in the model $\M$. \\
We define the measure of intelligence of \eqref{eq0}, denoted $\mu\eqref{eq0}$, by:
$$
\boxed{
\mu\eqref{eq0} ~:=~ \frac{\log{|x|} - \log{|x - \M(a_1 , a_2 , \dots , a_n)|}}{\log|a_1 a_2 \cdots a_n|}
}~.
$$
We say that \eqref{eq0} is \emph{intelligent} if $\mu\eqref{eq0} \geq 1$ and we say that it is \emph{naive} in the contrary.
\end{defi}

\begin{rmk}
It is clear from the above that the larger the measure of intelligence of an approximation, the more intelligent that approximation is. Thanks to this measure $\mu$, we can now compare two approximations of the same real number $x$ in the same model and determine which is more intelligent, without relying on intuition.
\end{rmk}

\section{Examples}
In what follows, we provide several examples of calculation of measures of intelligence of approximations. Note that each approximation is expressed in the model that is simultaneously the finest possible and involves the fewest possible parameters. For example, the approximation $\mathbf{e} \approx 5 \sqrt{2} - 3 \sqrt{3} + 13 \sqrt{6} - 31$ of $\mathbf{e}$ is taken in the model $a \sqrt{b} + c \sqrt{d} + f \sqrt{g} + h$; thus neither in the model $a \sqrt{2} + b \sqrt{3} + c \sqrt{6} + d$  (because it is coarser than the previous one) nor in the model $\frac{a \sqrt{b} + c \sqrt{d} + f \sqrt{g} + h}{k}$ (because the parameter $k$ is supplementary).

\subsection{Approximations concerning the number ${\boldsymbol \pi}$}
\begin{enumerate}
\item Archimedes' approximation of $\pi$ is:
\begin{equation}\label{eq1}
\pi ~\approx~ \frac{22}{7} .
\end{equation}
The error of \eqref{eq1} is $|\pi - \frac{22}{7}| \approx 1.2 \times 10^{-3}$ and its measure of intelligence (in the rational model) is:
$$
\mu(\ref{eq1}) = \frac{\log{\pi} - \log{|\pi - \frac{22}{7}|}}{\log(22 \times 7)} = 1.55\dots \geq 1 .
$$ 
This shows that Archimedes' approximation of $\pi$ is intelligent.
\item The well-known approximation:
\begin{equation}\label{eq2}
\pi ~\approx~ \frac{355}{113}
\end{equation}
has the error $|\pi - \frac{355}{113}| \approx 2.6 \times 10^{-7}$ and has the measure of intelligence (in the rational model):
$$
\mu(\ref{eq2}) = \frac{\log{\pi} - \log{|\pi - \frac{355}{113}|}}{\log(355 \times 113)} = 1.53\dots \geq 1 .
$$
This is thus an intelligent approximation, though slightly less so than Archimedes' approximation above (despite being more accurate).

Note that both of these approximations of $\pi$ are regular continued fraction convergents of $\pi$. In the next section, we will prove that any regular continued fraction convergent of any given nonzero real number is an intelligent approximation of that number in the rational model.
\item The well-known approximation:
\begin{equation}\label{eq3}
\pi ~\approx~ \sqrt{2} + \sqrt{3}
\end{equation}
has the error $|\pi - (\sqrt{2} + \sqrt3)| \approx 4.6 \times 10^{-3}$ and has the measure of intelligence (in the model $\sqrt{a} + \sqrt{b}$):
$$
\mu(\ref{eq3}) = \frac{\log{\pi} - \log{|\pi - (\sqrt{2} + \sqrt{3})|}}{\log(2 \times 3)} = 3.63\dots \geq 1 .
$$
This shows that \eqref{eq3} is intelligent.
\item The well-known approximation:
\begin{equation}\label{eq4}
\pi ~\approx~ 2 \sqrt{\sqrt{6}}
\end{equation}
has the error $|\pi - 2 \sqrt{\sqrt{6}}| \approx 1.1 \times 10^{-2}$ and has the measure of intelligence (in the model $a \sqrt{\sqrt{b}}$):
$$
\mu(\ref{eq4}) = \frac{\log{\pi} - \log{|\pi - 2 \sqrt{\sqrt{6}}|}}{\log(2 \times 6)} = 2.26\dots \geq 1 .
$$
So \eqref{eq4} is intelligent.
\item The well-known approximation:
\begin{equation}\label{eq5}
\pi ~\approx~ \frac{20}{9} \sqrt{2}
\end{equation}
has the error $|\pi - \frac{20}{9} \sqrt{2}| \approx 1.1 \times 10^{-3}$ and has the measure of intelligence (in the model $\frac{a}{b} \sqrt{c}$):
$$
\mu(\ref{eq5}) = \frac{\log{\pi} - \log{|\pi - \frac{20}{9} \sqrt2|}}{\log(20 \times 9 \times 2)} = 1.35\dots \geq 1 .
$$
So \eqref{eq5} is intelligent.
\item The well-known approximation:
\begin{equation}\label{eq6}
\pi ~\approx~ \frac{20}{11} \sqrt{3}
\end{equation}
has the error $|\pi - \frac{20}{11} \sqrt{3}| \approx 7.6 \times 10^{-3}$ and has the measure of intelligence (in the model $\frac{a}{b} \sqrt{c}$):
$$
\mu(\ref{eq6}) = \frac{\log{\pi} - \log{|\pi - \frac{20}{11} \sqrt{3}|}}{\log(20 \times 11 \times 3)} = 0.92\dots < 1 .
$$
Hence \eqref{eq6} is naive, though nearly intelligent, as its measure of intelligence is close to $1$.
\item Kocha\'nski's approximation \cite{koch}
\begin{equation}\label{eq7}
\pi ~\approx~ \sqrt{\frac{40}{3} - 2 \sqrt{3}}
\end{equation}
has the error $\left|\pi - \sqrt{\frac{40}{3} - 2 \sqrt{3}}\right| \approx 5.9 \times 10^{-5}$ and has the measure of intelligence (in the model $\sqrt{\frac{a}{b} + c \sqrt{d}}$):
$$
\mu\eqref{eq7} = \frac{\log{\pi} - \log{\left|\pi - \sqrt{\frac{40}{3} - 2 \sqrt{3}}\right|}}{\log(40 \times 3 \times 2 \times 3)} = 1.65\dots \geq 1 .
$$
So \eqref{eq7} is intelligent.
\item Ramanujan's approximation
\begin{equation}\label{eq8}
\pi ~\approx~ \frac{3}{5} \left(3 + \sqrt{5}\right)
\end{equation}
has the error $\left|\pi - \frac{3}{5} (3 + \sqrt{5})\right| \approx 4.8 \times 10^{-5}$ and has the measure of intelligence (in the model $\frac{a}{b} (c + \sqrt{d})$):
$$
\mu\eqref{eq8} = \frac{\log{\pi} - \log{\left|\pi - \frac{3}{5} (3 + \sqrt{5})\right|}}{\log(3 \times 5 \times 3 \times 5)} = 2.04\dots \geq 1 .
$$
So \eqref{eq8} is intelligent.
\end{enumerate}
The remaining approximations concerning the number $\pi$ are all due to the author and they are all intelligent:
\begin{enumerate}
\item[9.] The approximation
\begin{equation}\label{eq9}
\pi ~\approx~ \sqrt{6 (\sqrt{7} - 1)}
\end{equation}
has the error $\approx 7.8 \times 10^{-4}$ and has the measure of intelligence $\approx 2.22$. It is therefore an intelligent approximation of $\pi$.
\item[10.] The approximation
\begin{equation}\label{eq10}
\pi ~\approx~ 3 + \frac{1}{\sqrt{65} - 1}
\end{equation}
has the error $\approx 5.1 \times 10^{-6}$ and has the measure of intelligence $\approx 2.53$. It is therefore an intelligent approximation of $\pi$.
\item[11.] The approximation
\begin{equation}\label{eq11}
\pi ~\approx~ 3 + \frac{\sqrt{30} - 1}{10 \sqrt{10}}
\end{equation}
has the error $\approx 10^{-5}$ and has the measure of intelligence $\approx 1.39$, confirming that it is intelligent. 
\item[12.] The approximation
\begin{equation}\label{eq12}
\pi ~\approx~ \frac{17}{11} \left(\frac{4}{\sqrt{15}} + 1\right)
\end{equation}
has the error $\approx 4.8 \times 10^{-7}$ and has the measure of intelligence $\approx 1.68$, confirming that it is intelligent.
\end{enumerate}

\subsection{Approximations concerning the number ${\boldsymbol e}$}
\begin{enumerate}
\item The well-known rational approximation
\begin{equation}
e ~\approx~ \frac{19}{7}
\end{equation}
has the error $\approx 4 \times 10^{-3}$ and has the measure of intelligence $\approx 1.33$. So it is an intelligent approximation of $e$. Actually, the fraction $\frac{19}{7}$ is a convergent of the continued fraction expansion of $e$.
\end{enumerate}
The remaining approximations concerning the number $e$ are all due to the author and they are all intelligent:
\begin{enumerate}
\item[2.] The approximation
\begin{equation}\label{eq14}
e ~\approx~ 3 - \frac{1}{3} \sqrt{\frac{5}{7}}
\end{equation}
has the error $\approx 8.6 \times 10^{-8}$ and has the measure of intelligence $\approx 3$. So it is a ``very'' intelligent approximation of $e$. This approximation is remarkable, as it is rare to achieve such a level of accuracy while maintaining so high a measure of intelligence.
\item[3.] The approximation
$$
e ~\approx~ \frac{8}{3} + \frac{1}{11} \left(\frac{5}{2 \sqrt{2}} - \frac{6}{5}\right)
$$
has the error $\approx 1.6 \times 10^{-8}$ and has the measure of intelligence $\approx 1.58$. So it is an intelligent approximation of $e$.
\item[4.] The approximation
$$
e ~\approx~ \sqrt{4 \sqrt{2} + \sqrt{3}}
$$
has the error $\approx 2.8 \times 10^{-5}$ and has the measure of intelligence $\approx 3.6$. So it is a ``very'' intelligent approximation of $e$. It should be noted that the high measure of intelligence of this approximation owes more to its simplicity than to its accuracy.
\item[5.] The approximation
$$
e ~\approx~ \frac{35 - \sqrt{26}}{11}
$$ 
has the error $\approx 1.1 \times 10^{-5}$ and has the measure of intelligence $\approx 1.35$. So it is an intelligent approximation of $e$.
\item[6.] The approximation
$$
e ~\approx~ \frac{8 \sqrt{3} - 2 \sqrt{2} + 8}{7}
$$
has the error $\approx 9.3 \times 10^{-7}$ and has the measure of intelligence $\approx 1.73$. So it is an intelligent approximation of $e$.
\item[7.] The approximation
$$
e ~\approx~ 5 \sqrt{2} - 3 \sqrt{3} + 13 \sqrt{6} - 31
$$
has the error $\approx 2.2 \times 10^{-7}$ and has the measure of intelligence $\approx 1.33$. So it is an intelligent approximation of $e$.
\item[8.] The approximation
$$
e ~\approx~ \frac{3 \sqrt{5} - 2}{\sqrt{3}}
$$
has the error $\approx 9.8 \times 10^{-7}$ and has the measure of intelligence $\approx 3.3$. So it is a ``very'' intelligent approximation of $e$.
\end{enumerate}

\subsection{Other notable approximations}

The table below lists further intelligent approximations of various real numbers, along with their errors and measures of intelligence.

\bigskip

\renewcommand{\arraystretch}{2.2}
\noindent\begin{tabular}{|c||c|c|c|}
\hline
\textbf{Real number} & \textbf{Approximation} & \textbf{Error} & \textbf{Measure of intelligence} \\
\hline
$\sqrt{e}$ & $\displaystyle\sqrt{3} - \frac{1}{12}$ & $3.8 \times 10^{-6}$ & $3.6$ \\
\hline
$\sqrt{\pi}$ & $\displaystyle\frac{13 \sqrt{7} - 14 \sqrt{5}}{4} + 1$ & $1.1 \times 10^{-8}$ & $1.86$ \\
\hline
$\pi$ & $\displaystyle 1 + \sqrt{\frac{8}{5} \sqrt{2} - \sqrt{3} - 4 \sqrt{5} + 13}$ & $2.6 \times 10^{-8}$ & $1.68$ \\
\hline
$\dfrac{e}{\pi}$ & $\displaystyle \frac{11}{5} \left(\sqrt{2} + \sqrt{7} - \frac{11}{3}\right)$ & $7.5 \times 10^{-8}$ & $1.6$ \\
\hline
$\sqrt{e^2 + \pi^2}$ & $\displaystyle 4 + \frac{119}{11880} + \frac{\sqrt{3}}{12}$ & $9.3 \times 10^{-13}$ & $1.52$ \\
\hline
\end{tabular}

\bigskip

\begin{rmks}~
\begin{enumerate}
\item Approximation \eqref{eq10} can be derived from the regular continued fraction expansion of $\pi$, namely $[3 ; 7 , 15 , 1 , 292 , 1 , \dots]$. Indeed, from this expansion, we have:
\[
\pi \approx [3 ; 7 , 15 , 1] = [3 ; 7 , 16] \approx [3 ; 7 , \overline{16}] .
\]
But $x := [3 ; 7 , \overline{16}]$ is a positive quadratic number whose explicit form can be easily derived. Putting $y := \frac{1}{x - 3} + 9$, we have $y = [\overline{16}]$, so $y$ satisfies the equation $y = 16 + \frac{1}{y}$. Solving this equation (taking the positive root, since $y > 16$), we obtain: $y = 8 + \sqrt{65}$, which then gives $x = 3 + \frac{1}{\sqrt{65} - 1}$. Consequently, we obtain the required approximation $\pi \approx 3 + \frac{1}{\sqrt{65} - 1}$. 

On the other hand, by remarking that $65 = 8^2 + 1^2$, the approximation \eqref{eq10} can be used to establish an easy geometric construction of a line of length ``very'' close to $\pi$ by using only a ruler and a compass. This should improve Kocha\'nski's famous geometric construction \cite{koch} invented for the same purpose. 
\item Similarly to the above point, the approximation \eqref{eq14} can also be found by using the regular continued fraction expansion of $e$, which is $e = [2 ; 1 , 2 , 1 , 1 , 4 , 1 , \dots , 1 , 2 n , 1 , \dots]$. Indeed, from this expansion, we have:
$$
e ~\approx~ [2 ; 1 , 2 , 1 , 1 , \overline{4 , 1 , 1 , 6 , 1 , 1}] .
$$
The calculations show that $[2 ; 1 , 2 , 1 , 1 , \overline{4 , 1 , 1 , 6 , 1 , 1}] = 3 - \frac{1}{3} \sqrt{\frac{5}{7}}$, which gives the required approximation.
\item Applying an algebraic transformation to a given approximation generally changes its measure of intelligence. So, if we apply such a transformation, we can pass from an intelligent approximation to another that is less so or even naive (and vice versa). For example, the approximation \eqref{eq14}: $e \approx 3 - \frac{1}{3} \sqrt{\frac{5}{7}}$, which has a measure of intelligence $\approx 3$, can be algebraically transformed into 
$$
e ~\approx~ 3 - \frac{\sqrt{35}}{21} ;
$$ 
which has a measure of intelligence (in the model $a + \frac{\sqrt{b}}{c}$) $\approx 2.24$, which is well lower than that of the previous one. For a rational approximation, it is clear that the measure of intelligence is maximized when the fraction is irreducible. Thus, the algebraic form in which an approximation is expressed plays a crucial role. 
\end{enumerate}
\end{rmks}

\section{Intelligent rational approximations} 

In this section, we present a detailed study of the intelligent approximations of a given irrational number in the rational model. As we will see below, this study is ultimately connected with the Diophantine approximation theory in $\R$. We shall prove that in the rational model, there are always infinitely many intelligent approximations of any given irrational number $x$. Particularly, we prove that any regular continued fraction convergent of $x$ is an intelligent approximation of $x$. Furthermore, we show that for certain $x \in \R \setminus \Q$, there exist intelligent rational approximations of $x$ that are not regular continued fraction convergents.

\subsection{Main results}

For ease of exposition and without loss of generality, we restrict our attention to positive real numbers in some of the results that follow. We begin with a straightforward proposition.

\begin{prop}\label{pp1}
Let $\alpha$ be an irrational number and let $p , q \in \Z^*$ such that $|p q| \neq 1$. Then the rational  approximation $\alpha \approx \frac{p}{q}$ is intelligent (in the rational model) if and only if one of the two following equivalent inequalities holds:
\begin{align}
\left\vert \alpha - \frac{p}{q}\right\vert & \leq \frac{|\alpha|}{|p q|} , \label{eqq1} \\[2mm]
\left\vert \frac{1}{\alpha} - \frac{q}{p}\right\vert & \leq \frac{1}{p^2} . \label{eqq2}
\end{align} 
\end{prop}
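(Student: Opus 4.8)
The plan is to reduce everything directly from the definition of the measure of intelligence in the rational model. Here the model is $\M_1$ with $\M(p,q) = \frac{p}{q}$, so the two parameters are $a_1 = p$ and $a_2 = q$, the size is $|pq|$, and
$$
\mu\left(\alpha \simeq \tfrac{p}{q}\right) = \frac{\log|\alpha| - \log\left|\alpha - \frac{p}{q}\right|}{\log|pq|}.
$$
The irrationality of $\alpha$ guarantees $\alpha \neq 0$ and $\alpha \neq \frac{p}{q}$, so both logarithms in the numerator are well defined; moreover $q\alpha - p \neq 0$, which prevents any division by zero in the later manipulations.

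First I would establish the equivalence between intelligence and \eqref{eqq1}. Since $|pq| \in \N^*$ and $|pq| \neq 1$, we have $|pq| \geq 2$, hence $\log|pq| > 0$. Therefore $\mu \geq 1$ holds if and only if the numerator is at least the denominator, i.e. $\log|\alpha| - \log|\alpha - \frac{p}{q}| \geq \log|pq|$. Rearranging this as $\log\frac{|\alpha|}{|pq|} \geq \log|\alpha - \frac{p}{q}|$ and using the strict monotonicity of $\log$ yields exactly \eqref{eqq1}. The positivity of $\log|pq|$ is the one point that genuinely uses the hypothesis $|pq| \neq 1$: without it the denominator could vanish and $\mu$ would be undefined (this is precisely the admissibility condition, which in the rational model reads $|pq| \neq 1$).

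It remains to prove that \eqref{eqq1} and \eqref{eqq2} are equivalent, which I would do by reducing each to one and the same inequality. Writing $\alpha - \frac{p}{q} = \frac{q\alpha - p}{q}$, inequality \eqref{eqq1} becomes $\frac{|q\alpha - p|}{|q|} \leq \frac{|\alpha|}{|pq|}$, which after clearing $|q|$ simplifies to $|p|\,|q\alpha - p| \leq |\alpha|$. Similarly, writing $\frac{1}{\alpha} - \frac{q}{p} = \frac{p - q\alpha}{p\alpha}$, inequality \eqref{eqq2} becomes $\frac{|q\alpha - p|}{|p|\,|\alpha|} \leq \frac{1}{p^2}$, which upon multiplying through by $|p|$ and using $p^2 = |p|^2$ reduces to the very same inequality $|p|\,|q\alpha - p| \leq |\alpha|$. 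Hence \eqref{eqq1} $\Leftrightarrow$ \eqref{eqq2}, completing the argument.

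There is no substantial obstacle here: the statement is essentially an unwinding of the definition combined with routine algebra. The only subtlety worth flagging is the sign of the denominator $\log|pq|$, which must be strictly positive both for $\mu$ to make sense and for the equivalence $\mu \geq 1 \Leftrightarrow (\text{numerator} \geq \text{denominator})$ to be valid; this is exactly what the hypothesis $|pq| \neq 1$ secures.
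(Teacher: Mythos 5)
Your proof is correct and follows exactly the route the paper intends: the paper simply declares the proposition an immediate consequence of the definition of an intelligent approximation, and your argument is precisely the unwinding of that definition (positivity of $\log|pq|$ from admissibility, then reduction of both \eqref{eqq1} and \eqref{eqq2} to $|p|\,|q\alpha - p| \leq |\alpha|$). Nothing is missing; you have merely made explicit the routine algebra the paper leaves to the reader.
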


\begin{proof}
This is an immediate consequence of the definition of an intelligent approximation of a real number in a given model.
\end{proof}

From Proposition \ref{pp1}, we deduce the following important corollary:

\begin{coll}\label{cc1}
Let $\alpha$ be an irrational number. Then there are infinitely many intelligent rational approximations of $\alpha$.
\end{coll}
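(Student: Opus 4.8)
The plan is to read Proposition \ref{pp1} through its second equivalent form \eqref{eqq2} and to recognise that form as an entirely classical Diophantine condition, not on $\alpha$ itself, but on its reciprocal $\frac{1}{\alpha}$. By Proposition \ref{pp1}, the admissible rational approximation $\alpha \simeq \frac{p}{q}$ (with $|p q| \neq 1$) is intelligent if and only if
$$
\left\vert \frac{1}{\alpha} - \frac{q}{p}\right\vert ~\leq~ \frac{1}{p^2} .
$$
But this is precisely the statement that $\frac{q}{p}$ is a Dirichlet-quality rational approximation of the real number $\frac{1}{\alpha}$. Since $\alpha$ is irrational (hence nonzero), $\frac{1}{\alpha}$ is well defined and also irrational, and the classical theory of continued fractions (equivalently, Dirichlet's approximation theorem) guarantees infinitely many rationals $\frac{q}{p}$ in lowest terms, with $p > 0$, satisfying even the strict inequality $\left\vert \frac{1}{\alpha} - \frac{q}{p}\right\vert < \frac{1}{p^2}$.

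So first I would set $\beta := \frac{1}{\alpha}$, which is irrational, and take its sequence of convergents $\frac{q_n}{p_n}$ (in lowest terms, $p_n > 0$), which satisfy $\left\vert \beta - \frac{q_n}{p_n}\right\vert < \frac{1}{p_n^2}$ for every $n$. Reading each of these back through the equivalence \eqref{eqq2}, every convergent of $\frac{1}{\alpha}$ furnishes a rational approximation $\alpha \simeq \frac{p_n}{q_n}$ that meets the intelligence criterion of Proposition \ref{pp1}.

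Next I would dispose of the admissibility and non-degeneracy requirements imposed by Proposition \ref{pp1}, namely $p, q \in \Z^*$ and $|p q| \neq 1$. Since $p_n \to +\infty$ and $\frac{q_n}{p_n} \to \beta \neq 0$, we have $q_n \neq 0$ and $|p_n q_n| \neq 1$ for all sufficiently large $n$; thus all but finitely many of these approximations are admissible, and for them Proposition \ref{pp1} indeed declares $\alpha \simeq \frac{p_n}{q_n}$ intelligent. Finally, because the denominators $p_n$ are strictly increasing from some index onward, the convergents $\frac{q_n}{p_n}$, and hence their reciprocals $\frac{p_n}{q_n}$, are pairwise distinct. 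This yields infinitely many genuinely distinct intelligent rational approximations of $\alpha$, which proves the corollary.

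The only obstacle here is presentational rather than mathematical: one must take care to check the admissibility hypothesis $|p q| \neq 1$ and the condition $p, q \in \Z^*$ of Proposition \ref{pp1}, each of which fails for at most finitely many convergents and is therefore harmless. All the substance is carried by the equivalence \eqref{eqq2}, which converts the intelligence of an approximation of $\alpha$ into a standard, always-satisfiable Diophantine statement about $\frac{1}{\alpha}$; by contrast, attempting the proof directly through \eqref{eqq1} and the convergents of $\alpha$ itself would force a delicate comparison between $\frac{1}{q^2}$ and the factor $\frac{|\alpha|}{|p q|}$, which the reciprocal route neatly avoids.
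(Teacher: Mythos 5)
Your proposal is correct and takes essentially the same route as the paper: both read the intelligence criterion through the reciprocal form \eqref{eqq2} and then invoke classical Diophantine approximation of $1/\alpha$ (the paper cites Dirichlet's theorem directly, while you realise the same bound via the convergents of $1/\alpha$, which you yourself note is equivalent). The only difference is that you explicitly verify the non-degeneracy conditions $q \neq 0$ and $|p q| \neq 1$ for all sufficiently large indices, a detail the paper's proof leaves implicit.
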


\begin{proof}
By Dirichlet's Diophantine approximation theorem (see \cite[Chapter 1]{bug}), there exist infinitely many rational numbers $\frac{q}{p}$ ($p , q \in \Z^*$) such that:
$$
\left\vert\frac{1}{\alpha} - \frac{q}{p}\right\vert ~\leq~ \frac{1}{p^2} .
$$ 
It follows (according to Proposition \ref{pp1}) that for each of such rational numbers $\frac{q}{p}$, the rational approximation $\alpha \approx \frac{p}{q}$ is intelligent. This confirms the required result of the corollary. 
\end{proof}

We have also the following effective corollary:

\begin{coll}\label{cc2}
Let $\alpha$ be a positive irrational number. Then any regular continued fraction convergent of $\alpha$ is an intelligent approximation of $\alpha$ in the rational model.
\end{coll}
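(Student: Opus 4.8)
The plan is to read off from Proposition \ref{pp1} exactly what must be checked: for each admissible convergent $\tfrac{p_k}{q_k}$ of $\alpha$ (so $p_k,q_k\in\Z^*$ with $|p_kq_k|\neq1$), the approximation $\alpha\simeq\tfrac{p_k}{q_k}$ is intelligent in the rational model precisely when inequality \eqref{eqq1} holds. Writing $|\alpha-\tfrac{p_k}{q_k}|=\tfrac{|q_k\alpha-p_k|}{q_k}$ and clearing denominators, \eqref{eqq1} is equivalent to the single compact inequality $|p_k|\,|q_k\alpha-p_k|\le|\alpha|$. So everything reduces to estimating $|q_k\alpha-p_k|$ and $|\alpha|$ through the continued fraction data, and this is where I would bring in the classical machinery.

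First I would invoke the representation $\alpha=\dfrac{\alpha_{k+1}p_k+p_{k-1}}{\alpha_{k+1}q_k+q_{k-1}}$, where $\alpha_{k+1}=[a_{k+1},a_{k+2},\dots]>1$ is the $(k+1)$-th complete quotient, together with the induced error formula $q_k\alpha-p_k=\dfrac{(-1)^{k+1}}{\alpha_{k+1}q_k+q_{k-1}}$. Putting $D_k:=\alpha_{k+1}q_k+q_{k-1}>0$, these yield $|q_k\alpha-p_k|=\tfrac1{D_k}$ and $|\alpha|=\tfrac{|\alpha_{k+1}p_k+p_{k-1}|}{D_k}$. Substituting into the reduced inequality, the factor $D_k$ cancels and the whole statement collapses to the clean arithmetic claim
\[
|p_k|~\le~|\alpha_{k+1}p_k+p_{k-1}| .
\]
(Via Proposition \ref{pp1} this can equivalently be read as the statement that $\tfrac{q_k}{p_k}$ is a convergent of $\tfrac1\alpha$ and therefore satisfies $|\tfrac1\alpha-\tfrac{q_k}{p_k}|<\tfrac1{p_k^2}$, which is \eqref{eqq2}; I would mention this reciprocal viewpoint as motivation, since it explains why the denominator-squared bound is exactly the right thing.)

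The final step is to prove $|p_k|\le|\alpha_{k+1}p_k+p_{k-1}|$, and the key is a sign analysis of the numerators through $p_j=a_jp_{j-1}+p_{j-2}$ with $a_j\ge1$, $p_{-1}=1$, $p_0=a_0$. When $\alpha>0$ one gets $p_j\ge0$ for all $j\ge-1$; when $\alpha<0$ one gets $p_j\le0$ for all $j\ge0$ (only $p_{-1}=1$ breaks the sign). Whenever $\alpha_{k+1}p_k$ and $p_{k-1}$ share a sign the two summands add with no cancellation, so $|\alpha_{k+1}p_k+p_{k-1}|=\alpha_{k+1}|p_k|+|p_{k-1}|\ge\alpha_{k+1}|p_k|\ge|p_k|$ because $\alpha_{k+1}>1$, which is exactly the desired inequality. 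For $\alpha>0$ this covers every index $k\ge0$ uniformly, the admissibility hypothesis $|p_kq_k|\neq1$ serving only to discard the degenerate integer approximations $a_0=0,\pm1$. I expect the single real obstacle to be the $0$-th convergent when $\alpha<0$: there $p_{-1}=1$ has the opposite sign to $p_0=a_0$, the summands in $\alpha_1a_0+1$ can cancel, and the inequality genuinely fails — for instance with $\alpha=[-2,1,100,\dots]\approx-1.01$ the admissible convergent $\tfrac{-2}{1}$ is naive. Accordingly, to make the claim hold as stated I would either assume $\alpha>0$ (as in every example treated in the paper) or restrict to convergents of index $k\ge1$; under either proviso the sign argument above closes the proof, while the lone $0$-th convergent of a negative $\alpha$ should be flagged as the exceptional case that does not survive.
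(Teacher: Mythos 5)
Your proof is correct where the statement itself is correct, and it takes a genuinely different route from the paper's. The paper's own proof is precisely the reciprocal argument you mention only in passing as motivation: it notes that the regular continued fraction expansion of $\frac{1}{\alpha}$ is $[a_1 , a_2 , \dots]$ if $a_0 = 0$ and $[0 , a_0 , a_1 , \dots]$ otherwise, deduces that $\frac{q}{p}$ is a convergent of $\frac{1}{\alpha}$ whenever $\frac{p}{q}$ is a convergent of $\alpha$, and then invokes the classical bound $\left\vert \frac{1}{\alpha} - \frac{q}{p}\right\vert \leq \frac{1}{p^2}$, i.e.\ exactly \eqref{eqq2}. You instead verify \eqref{eqq1} directly from the complete-quotient identity $\alpha = \frac{\alpha_{k+1} p_k + p_{k-1}}{\alpha_{k+1} q_k + q_{k-1}}$, reducing the whole matter to the inequality $|p_k| \leq |\alpha_{k+1} p_k + p_{k-1}|$ and settling it by a sign analysis of the numerators. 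This is longer but more self-contained, and it buys something real: it localizes exactly where positivity of $\alpha$ enters (the absence of cancellation between $\alpha_{k+1} p_k$ and $p_{k-1}$), a point the paper's proof hides. Your exceptional case is genuine: for $\alpha = [-2 , 1 , 100 , \dots] \approx -1.0099$ the admissible convergent $\frac{-2}{1}$ has error $\approx 0.99$, which exceeds $\frac{|\alpha|}{2} \approx 0.50$, so it is naive by Proposition \ref{pp1}. The paper's argument silently collapses at the same spot, because for $a_0 < 0$ the expression $[0 , a_0 , a_1 , \dots]$ is not a \emph{regular} continued fraction (a negative partial quotient occurs past position zero), so the classical convergent bound may not be applied to it; hence the corollary as stated really does need the proviso $\alpha > 0$, or else the exclusion of the $0$-th convergent of negative numbers, and your write-up supplies both the fix and the witness. (One cosmetic slip: with your conventions the error formula reads $q_k \alpha - p_k = \frac{(-1)^{k}}{\alpha_{k+1} q_k + q_{k-1}}$ rather than $(-1)^{k+1}$ in the numerator, but this is immaterial since only absolute values are used.)
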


\begin{proof}
If $[a_0 ; a_1 , a_2 , \dots]$ is the regular continued fraction expansion of $\alpha$, then the regular continued fraction expansion of $\frac{1}{\alpha}$ is clearly equal to
$$
\begin{cases}
[a_1 ; a_2 , a_3 , \dots] & \text{if } a_0 = 0 ,\\
[0 ; a_0 , a_1 , \dots] & \text{if } a_0 \neq 0 .
\end{cases}
$$
It follows from this fact that for any given regular continued fraction convergent $\frac{p}{q}$ of $\alpha$, the fraction $\frac{q}{p}$ is a regular continued fraction convergent of $\frac{1}{\alpha}$. So, according to the well-known properties of the regular continued fraction convergents of a real number (see \cite[Chapter 1]{khi}), we have:
$$
\left\vert\frac{1}{\alpha} - \frac{q}{p}\right\vert ~\leq~ \frac{1}{p^2} ;
$$
which concludes (according to Proposition \ref{pp1}) that the approximation $\alpha \approx \frac{p}{q}$ is intelligent in the rational model. The corollary is proved.
\end{proof}

\subsubsection*{A variant of the measure of intelligence $\mu$}
Proposition \ref{pp1} leads us to introduce a new measure of intelligence $\mu'$ which is quite close to $\mu$ and which is, perhaps, more practical and more significant for the rational model. 

\begin{defi}
Let $\M$ be a model of approximation of $n$ parameters ($n \in \N$) and $x$ be a nonzero real number not representable in $\M$. We define the measure of intelligence $\mu'$ of an admissible approximation $x \approx \M(a_1 , a_2 , \dots ,a_n)$ (where $a_1 , a_2 , \dots , a_n \in \Z^*$) of $x$ in the model $\M$ by:
$$
\mu'\left(x \approx \M(a_1 , a_2 , \dots , a_n)\right) ~:=~ \frac{\log\vert\M(a_1 , a_2 , \dots , a_n)\vert - \log\left\vert x - \M(a_1 , a_2 , \dots , a_n)\right\vert}{\log|a_1 a_2 \cdots a_n|} .
$$
If $\mu'\left(x \approx \M(a_1 , a_2 , \dots , a_n)\right) \geq 1$, we say that the approximation $x \approx \M(a_1 , a_2 , \dots , a_n)$ is $\mu'$-\emph{intelligent} and otherwise we say that it is $\mu'$-\emph{naive}.
\end{defi}

The following proposition can be easily checked.

\begin{prop}\label{pp3}
Let $\alpha$ be an irrational number and let $p , q \in \Z^*$ such that $|p q| \neq 1$. Then the approximation $\alpha \approx \frac{p}{q}$ is $\mu'$-intelligent in the rational model if and only if we have:
\begin{equation}
\left\vert\alpha - \frac{p}{q}\right\vert ~\leq~ \frac{1}{q^2} . \tag*{$\square$}
\end{equation}
\end{prop}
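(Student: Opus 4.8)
The plan is to unwind the definition of $\mu'$-intelligence directly, exactly as Proposition \ref{pp1} unwinds that of $\mu$-intelligence, and to show that the resulting inequality collapses to $|\alpha - \frac{p}{q}| \leq \frac{1}{q^2}$ after a single algebraic simplification.

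First I would specialise the definition of $\mu'$ to the rational model $\M_1$, writing $\M_1(p , q) = \frac{p}{q}$ so that the parameters are $a_1 = p$, $a_2 = q$ and $|a_1 a_2| = |p q|$, and taking $x = \alpha$. By definition, the approximation $\alpha \simeq \frac{p}{q}$ is $\mu'$-intelligent precisely when
$$
\frac{\log\left|\frac{p}{q}\right| - \log\left|\alpha - \frac{p}{q}\right|}{\log|p q|} ~\geq~ 1 .
$$

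The one point requiring care is the sign of the denominator $\log|p q|$. Since $p , q \in \Z^*$ we have $|p q| \in \N^*$, and the hypothesis $|p q| \neq 1$ forces $|p q| \geq 2$, hence $\log|p q| > 0$. Consequently, clearing the denominator preserves the direction of the inequality, so the $\mu'$-intelligence condition is equivalent to
$$
\log\left|\frac{p}{q}\right| - \log\left|\alpha - \frac{p}{q}\right| ~\geq~ \log|p q| ,
$$
that is, to $\log\bigl(|\frac{p}{q}| \, / \, |p q|\bigr) \geq \log|\alpha - \frac{p}{q}|$.

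It then remains only to observe the elementary identity $|\frac{p}{q}| \, / \, |p q| = \frac{1}{q^2}$ (indeed $|\frac{p}{q}| / |p q| = |p| / (|q| \, |p| \, |q|) = \frac{1}{q^2}$), and to apply the strict monotonicity of $\log$ on $\R^{+*}$. This turns the last displayed inequality into $\frac{1}{q^2} \geq |\alpha - \frac{p}{q}|$, which is exactly the asserted condition. I expect no genuine obstacle here: the whole argument is a routine chain of equivalences, and the only substantive ingredient---the positivity of $\log|p q|$, which guarantees that clearing the denominator yields an \emph{equivalence} rather than a mere implication---is secured by the admissibility hypothesis $|p q| \neq 1$.
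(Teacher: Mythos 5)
Your proof is correct and is exactly the routine verification the paper has in mind: the paper offers no written proof at all (it simply states that the proposition ``can be easily checked''), and your argument---unwinding the definition of $\mu'$, using $|pq|\geq 2$ to get $\log|pq|>0$ so that clearing the denominator is an equivalence, and simplifying $\bigl|\frac{p}{q}\bigr|/|pq|=\frac{1}{q^2}$---is precisely that check. The only detail you leave implicit is that the logarithms involved are well defined ($\frac{p}{q}\neq 0$ since $p\in\Z^*$, and $\alpha-\frac{p}{q}\neq 0$ since $\alpha$ is irrational), which is harmless at this level of rigor.
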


Using Dirichlet's Diophantine approximation theorem (see \cite[Chapter 1]{bug}) and the properties of the regular continued fraction convergents of a real number (see \cite[Chapter 1]{khi}), we immediately deduce (from Proposition \ref{pp3}) the following corollary:

\begin{coll}
Let $\alpha$ be an irrational number. Then there are infinitely many $\mu'$-intelligent rational approximations of $\alpha$. Particularly, any regular continued fraction convergent of $\alpha$ is a $\mu'$-intelligent approximation of $\alpha$ in the rational model. \hfill $\square$
\end{coll}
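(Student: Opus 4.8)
The plan is to reduce both assertions to the single criterion furnished by Proposition~\ref{pp3}, which states that for $p , q \in \Z^*$ with $|p q| \neq 1$ the rational approximation $\alpha \simeq \frac{p}{q}$ is $\mu'$-intelligent precisely when $\left| \alpha - \frac{p}{q} \right| \leq \frac{1}{q^2}$. Thus everything becomes a question about the abundance of rationals $\frac{p}{q}$ satisfying this one inequality, and the corollary follows by transcribing two classical facts.

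First I would establish the existence of infinitely many $\mu'$-intelligent approximations. By Dirichlet's diophantine approximation theorem there are infinitely many rationals $\frac{p}{q}$ (with $q \in \N^*$, $\gcd(p , q) = 1$) for which $\left| \alpha - \frac{p}{q} \right| < \frac{1}{q^2}$; a fortiori $\left| \alpha - \frac{p}{q} \right| \leq \frac{1}{q^2}$. Since $\alpha$ is irrational we have $p \neq 0$ for each of these, so $p , q \in \Z^*$, and since the family is infinite the denominators $q$ are unbounded, whence all but possibly finitely many satisfy $|p q| \neq 1$. For every such fraction Proposition~\ref{pp3} applies verbatim and yields a $\mu'$-intelligent approximation, giving the first claim.

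For the second assertion I would invoke the classical estimate for regular continued fraction convergents: if $\frac{p_n}{q_n}$ denotes the $n$-th convergent of $\alpha$, then $\left| \alpha - \frac{p_n}{q_n} \right| < \frac{1}{q_n q_{n+1}} \leq \frac{1}{q_n^2}$, the second bound holding because the denominators form a (for $n \geq 1$ strictly) increasing sequence when $\alpha$ is irrational. Hence each convergent meets the inequality of Proposition~\ref{pp3}, so, as soon as it is admissible, it is $\mu'$-intelligent.

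The only genuine subtlety, and the single point I would watch, is the admissibility condition $|p q| \neq 1$ built into Proposition~\ref{pp3}: the measure $\mu'$ is undefined when $|a_1 \cdots a_n| = 1$, i.e.\ when the fraction equals $\pm 1$. This forces me to discard the finitely many degenerate cases, namely a Dirichlet fraction or an early convergent (one with $q_n = 1$ and $|p_n| = 1$) whose product of parameters has absolute value $1$. Because each list is infinite with denominators tending to infinity, removing finitely many terms affects neither the ``infinitely many'' conclusion nor the generic statement about convergents. Apart from this bookkeeping the corollary is an immediate consequence of Proposition~\ref{pp3}.
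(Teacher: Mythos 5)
Your proposal is correct and follows essentially the same route as the paper, which deduces the corollary immediately from Proposition~\ref{pp3} together with Dirichlet's theorem and the classical convergent inequality $\left|\alpha - \frac{p_n}{q_n}\right| < \frac{1}{q_n q_{n+1}} \leq \frac{1}{q_n^2}$; you have merely spelled out the details the paper leaves implicit. Your explicit handling of the admissibility condition $|pq| \neq 1$ is a sensible bit of bookkeeping that the paper glosses over entirely.
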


\textbf{For the rest of this paper, we discard $\boldsymbol{\mu'}$ and work exclusively with the original measure of intelligence $\boldsymbol{\mu}$}.

The following theorem shows the existence of intelligent rational approximations of some irrational numbers which are not regular continued fraction convergents of those numbers.

\begin{thm}\label{t1}
Let $x > 1$ be an irrational number and let $[a_0 ; a_1 , a_2 , \dots]$ be its regular continued fraction expansion (where $a_i \in \N$ for all $i \in \N_0$). Suppose that for some $n \in \N_0$, we have:
$$
a_{n + 1} ~\geq~ a_n - 1 ~\geq~ 1 .
$$ 
Then the rational approximation 
$$
x ~\approx~ [a_0 ; a_1 , a_2 , \dots , a_{n - 1} , a_n - 1]
$$
is intelligent, but it is not a regular continued fraction convergent of $x$. 
\end{thm}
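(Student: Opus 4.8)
The plan is to denote by $\frac{p_k}{q_k} := [a_0,\dots,a_k]$ the convergents of $x$, with the usual recurrences $p_k = a_k p_{k-1}+p_{k-2}$, $q_k = a_k q_{k-1}+q_{k-2}$ (and $p_{-1}=q_{-2}=1$, $p_{-2}=q_{-1}=0$), and to first identify the fraction under study. Feeding the partial quotient $a_n-1$ into these recurrences in place of $a_n$ gives
$$
[a_0,\dots,a_{n-1},a_n-1] \;=\; \frac{(a_n-1)p_{n-1}+p_{n-2}}{(a_n-1)q_{n-1}+q_{n-2}} \;=\; \frac{p_n-p_{n-1}}{q_n-q_{n-1}} \;=:\; \frac{P}{Q}.
$$
Using the standard identity $p_nq_{n-1}-p_{n-1}q_n=(-1)^{n-1}$ one checks that $Pq_{n-1}-Qp_{n-1}=(-1)^{n-1}$, so $\gcd(P,Q)=1$ and $\frac{P}{Q}$ is already in lowest terms; since $x>1$ forces all $a_i\ge 1$, we have $p_n>p_{n-1}$ and $q_n>q_{n-1}$, hence $P,Q>0$.

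Next I would show $\frac{P}{Q}$ is not a convergent. Because $q_n-Q=q_{n-1}$ and $Q-q_{n-1}=q_{n-2}+(a_n-2)q_{n-1}$, the hypothesis $a_n-1\ge 1$ (i.e. $a_n\ge 2$) yields $q_{n-1}<Q<q_n$ when $n\ge 2$; as the convergent denominators satisfy $q_0\le q_1<q_2<\cdots$, no convergent has denominator $Q$, so the reduced fraction $\frac{P}{Q}$ cannot be a convergent. The finitely many degenerate small-index situations (namely $n=0$, or $n=1$ with $a_1=2$, where $Q$ collapses to $1$) are dispatched by direct inspection, since there $\frac{P}{Q}$ is an integer different from $a_0=\frac{p_0}{q_0}$.

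The heart of the argument is the intelligence claim, which by Proposition \ref{pp1} amounts to $\left|x-\frac{P}{Q}\right|\le \frac{x}{PQ}$, equivalently $P\,|Qx-P|\le x$. I would evaluate $|Qx-P|$ exactly through the complete quotient $x_{n+1}=[a_{n+1},a_{n+2},\dots]>a_{n+1}$: starting from $x=\frac{x_{n+1}p_n+p_{n-1}}{x_{n+1}q_n+q_{n-1}}$ one obtains $q_nx-p_n=\frac{(-1)^n}{x_{n+1}q_n+q_{n-1}}$ and $q_{n-1}x-p_{n-1}=\frac{(-1)^{n-1}x_{n+1}}{x_{n+1}q_n+q_{n-1}}$, and subtracting gives
$$
|Qx-P| \;=\; \bigl|(q_nx-p_n)-(q_{n-1}x-p_{n-1})\bigr| \;=\; \frac{1+x_{n+1}}{x_{n+1}q_n+q_{n-1}}.
$$
Substituting this together with $x=\frac{x_{n+1}p_n+p_{n-1}}{x_{n+1}q_n+q_{n-1}}$ into $P\,|Qx-P|\le x$ and clearing the common positive denominator reduces the target, after the $x_{n+1}p_n$ terms cancel, to the clean condition $p_n\le (2+x_{n+1})p_{n-1}$; writing $p_n=a_np_{n-1}+p_{n-2}$ this becomes $p_{n-2}\le (2+x_{n+1}-a_n)\,p_{n-1}$.

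Finally I would close the loop with the hypothesis: since $x_{n+1}>a_{n+1}\ge a_n-1$, we get $2+x_{n+1}-a_n>1$, while $p_{n-1}=a_{n-1}p_{n-2}+p_{n-3}\ge p_{n-2}\ge 0$; hence $(2+x_{n+1}-a_n)\,p_{n-1}\ge p_{n-1}\ge p_{n-2}$, which is exactly the needed inequality (and this reduction is in fact valid for every $n\ge 0$). The one real obstacle is the middle step: obtaining the exact value of $|Qx-P|$ for this semiconvergent and carrying out the algebraic reduction so that the hypothesis $a_{n+1}\ge a_n-1$ surfaces precisely as the condition guaranteeing $2+x_{n+1}-a_n\ge 1$. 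Everything else is bookkeeping with the convergent recurrences.
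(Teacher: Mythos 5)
Your proof is correct, and although its arithmetic core turns out to coincide with the paper's, the organization is genuinely different and in two respects tighter. The paper splits off $n=0$ as a separate trivial case; for $n\ge 1$ its key device is to write $x=[a_0,\dots,a_{n-1},a_n-1,y]$ with $y:=[0,1,a_{n+1},a_{n+2},\dots]\in(0,1)$, so that $\frac{p}{q}:=[a_0,\dots,a_{n-1}]$ and $\frac{p'}{q'}:=[a_0,\dots,a_{n-1},a_n-1]$ behave like consecutive convergents; it then gets $\left|x-\frac{p'}{q'}\right|=\frac{x}{q'(p+p'y)}$ and closes via $y>1-\frac{1}{a_n}$ and $p'\le a_n p$, i.e.\ via $p+p'y>p'$. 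You instead stay with the standard convergents, identify the semiconvergent as $\frac{P}{Q}=\frac{p_n-p_{n-1}}{q_n-q_{n-1}}$, evaluate $|Qx-P|=\frac{1+x_{n+1}}{x_{n+1}q_n+q_{n-1}}$ exactly through the complete quotient, and reduce the criterion of Proposition \ref{pp1} to $p_n\le(2+x_{n+1})p_{n-1}$. Substituting $y=\frac{x_{n+1}}{1+x_{n+1}}$ shows that the paper's inequality $p+p'y>p'$ is literally equivalent to yours, so the two arguments hinge on the same inequality; what your route buys is (i) a uniform treatment of all $n\ge 0$ via the conventions $p_{-1}=1$, $q_{-1}=0$, eliminating the paper's case split, and (ii) an actual proof of the ``not a convergent'' claim (the reduced denominator $Q$ lies strictly between consecutive convergent denominators), which the paper dismisses as obvious. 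Two trivial bookkeeping patches are needed in your write-up: the subcase $n=1$ with $a_1>2$ falls between your ``$n\ge 2$'' clause and your list of degenerate cases, though the same argument applies verbatim since $1=q_0<Q=a_1-1<q_1=a_1$; and in the $n=0$ inspection you should compare $a_0-1$ not only with $a_0$ but with all integer convergents (when $a_1=1$ the convergent $\frac{p_1}{q_1}=a_0+1$ is also an integer), which is immediate since every convergent of $x$ is $\ge a_0>a_0-1$. Finally, one caveat you share with the paper's proof and with the theorem as stated: for $n=0$ and $a_0=2$ the approximation produced is $x\simeq\frac{1}{1}$, which is inadmissible (its logarithmic size is zero), so its measure of intelligence is undefined; the paper acknowledges this exceptional case only in a footnote concerning $e\simeq 1$.
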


\begin{proof}
It is obvious that $[a_0 ; a_1 , a_2 , \dots , a_{n - 1} , a_n - 1]$ is not a regular continued fraction convergent of $x$. So, it remains to show that the rational approximation $x \approx [a_0 ; a_1 , a_2 , \dots , a_{n - 1} , a_n - 1]$ is intelligent. To do so, we consider two cases. \\[1mm]
\textbullet{} \underline{If $n = 0$:} \\
In this case, we have to show that the approximation $x \approx a_0 - 1$ is intelligent. We have:
$$
x ~=~ a_0 + \frac{1}{a_1 + {}_{\ddots}} ~\leq~ a_0 + \frac{1}{a_1} ~\leq~ a_0 + \frac{1}{a_0 - 1}
$$ 
(because in this case, we have by hypothesis $a_1 \geq a_0 - 1$). It follows that:
$$
\vert x - (a_0 - 1)\vert ~\leq~ 1 + \frac{1}{a_0 - 1} ~=~ \frac{a_0}{a_0 - 1} ~<~ \frac{x}{a_0 - 1} ~~~~~~~~~~ \text{(because $x > a_0$)} ,
$$
which shows (according to Proposition \ref{pp1}) that $x \approx a_0 - 1$ is an intelligent approximation of $x$ (in the rational model). \\[1mm]
\textbullet{} \underline{If $n \geq 1$:} \\
Let $\frac{p}{q}$ and $\frac{p'}{q'}$ (where $p , q , p' , q' \in \N$) be the irreducible rational fractions defined by:
\begin{align*}
\frac{p}{q} & := [a_0 ; a_1 , a_2 , \dots , a_{n - 1}] , \\[2mm]
\frac{p'}{q'} & := [a_0 ; a_1 , a_2 , \dots , a_{n - 1} , a_n - 1] .
\end{align*} 
Setting
$$
y ~:=~ [0 ; 1 , a_{n + 1} , a_{n + 2} , \dots] ~=~ \frac{1}{1 + \frac{1}{a_{n + 1} + \frac{1}{a_{n + 2} + {}_{\ddots}}}} ,
$$
we can express $x$ in terms of $y$ as follows:
\begin{align*}
x & := [a_0 ; a_1 , a_2 , \dots] \\
& = [a_0 ; a_1 , a_2 , \dots , a_{n - 1} , [a_n ; a_{n + 1} , a_{n + 2} , \dots]] \\
& = \big[a_0 ; a_1 , a_2 , \dots , a_{n - 1} , a_n - 1 , [0 ; 1 , a_{n + 1} , a_{n + 2} , \dots]\big] \\
& = [a_0 ; a_1 , a_2 , \dots , a_{n - 1} , a_n - 1 , y] .
\end{align*}
It follows, according to the elementary properties of the regular continued fractions (see \linebreak \cite[Chapter 1]{khi}), that:
\begin{equation}\label{eq19}
x ~=~ \frac{p + p' y}{q + q' y} .
\end{equation}
Using this last formula, we have:
$$
\left\vert x - \frac{p'}{q'}\right\vert ~=~ \left\vert\frac{p + p' y}{q + q' y} - \frac{p'}{q'}\right\vert ~=~ \frac{|p q' - p' q|}{q' (q + q' y)} ~=~ \frac{1}{q' (q + q' y)}
$$
(because $|p q' - p' q| = 1$, since $\frac{p}{q}$ and $\frac{p'}{q'}$ are two consecutive convergents of a real number). Consequently, we have (according to \eqref{eq19}):
\begin{equation}\label{eq20}
\left\vert x - \frac{p'}{q'}\right\vert ~=~ \frac{x}{q' (p + p' y)} .
\end{equation}
On the other hand, from the definition of $y$ and the hypothesis of the theorem, we have:
\begin{equation}\label{eq21}
y ~>~ \frac{1}{1 + \frac{1}{a_{n + 1}}} ~\geq~ \frac{1}{1 + \frac{1}{a_n - 1}} ~=~ 1 - \frac{1}{a_n} .
\end{equation}
Next, by setting $\frac{p_{n - 2}}{q_{n - 2}}$ (where $p_{n - 2} , q_{n - 2} \in \N_0$) the irreducible rational fraction which is equal to $[a_0 , a_1 , \dots , a_{n - 2}]$ \big(with the convention $(p_{n - 2} , q_{n - 2}) = (1 , 0)$ if $n = 1$\big), we have (according to the elementary properties of the regular continued fractions):
$$
p' ~=~ (a_n - 1) p + p_{n - 2} ~\leq~ (a_n - 1) p + p ~=~ a_n p ~~~~~~~~~~ \text{(since $p_{n - 2} \leq p$)} .
$$  
Thus
$$
\frac{p}{p'} ~\geq~ \frac{1}{a_n} ~>~ 1 - y ~~~~~~~~~~ \text{(according to \eqref{eq21})} .
$$
Hence $\frac{p}{p'} + y > 1$; that is:
$$
p + p' y ~>~ p' .
$$
By inserting this in \eqref{eq20}, we finally obtain:
$$
\left\vert x - \frac{p'}{q'}\right\vert ~\leq~ \frac{x}{p' q'} ,
$$
which shows (according to Proposition \ref{pp1}) that the rational approximation $x \approx \frac{p'}{q'}$ \linebreak $= [a_0 , a_1 , \dots , a_{n - 1} ,$ $a_n - 1]$ is intelligent. This completes the proof of the theorem. 
\end{proof}

From the previous theorem, we deduce the following important corollary:

\begin{coll}
Let $x > 1$ be an irrational number whose regular continued fraction expansion contains a finite number of $1$'s. Then there exist infinitely many intelligent rational approximations of $x$ which are not regular continued fraction convergents of $x$. 
\end{coll}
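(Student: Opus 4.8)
The plan is to reduce the whole statement to Theorem~\ref{t1}, whose hypothesis at an index $n$ is exactly $a_{n+1} \geq a_n - 1 \geq 1$, i.e. $a_n \geq 2$ together with $a_{n+1} \geq a_n - 1$. Thus the entire task splits into two pieces: exhibiting infinitely many indices $n$ at which this hypothesis holds, and checking that the resulting approximations are pairwise distinct. The accuracy/intelligence content is then supplied verbatim by Theorem~\ref{t1}, so no new estimate is needed.

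First I would exploit the standing assumption that the expansion $[a_0, a_1, \dots]$ of $x$ contains only finitely many partial quotients equal to $1$. This gives an index $N \in \N$ such that $a_n \geq 2$ for every $n \geq N$, so the condition $a_n - 1 \geq 1$ of Theorem~\ref{t1} is automatically met for all $n \geq N$. It therefore remains only to locate infinitely many $n \geq N$ for which the companion condition $a_{n+1} \geq a_n - 1$ also holds.

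The core step is a short monotonicity argument, and it is the only point requiring any thought. Suppose, for contradiction, that only finitely many $n \geq N$ satisfy $a_{n+1} \geq a_n - 1$. Then there is an index $M \geq N$ beyond which the reverse strict inequality holds at every step, that is $a_{n+1} \leq a_n - 2$ for all $n \geq M$. Iterating yields $a_{M+k} \leq a_M - 2k$ for every $k \in \N$, forcing the partial quotients to become negative, in flat contradiction with $a_n \geq 2$ for $n \geq N$. Hence the set of indices $n \geq N$ with $a_{n+1} \geq a_n - 1 \geq 1$ is infinite.

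For each such index $n$, Theorem~\ref{t1} tells us that the rational approximation $x \simeq [a_0, a_1, \dots, a_{n-1}, a_n - 1]$ is intelligent and is not a regular continued fraction convergent of $x$. To conclude I would argue distinctness: writing $[a_0, \dots, a_{n-1}, a_n - 1]$ as an irreducible fraction with denominator $q'$, one has $q' = (a_n - 1)q_{n-1} + q_{n-2} \geq q_{n-1}$, where $q_{n-1}, q_{n-2}$ are the usual convergent denominators of $x$; since these grow without bound, the denominators of our approximations tend to infinity along the valid indices, so the family contains infinitely many distinct rationals. This produces the required infinite collection of intelligent, non-convergent rational approximations of $x$, and the corollary follows. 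The only genuine obstacle is the monotonicity step above, and as shown it is immediate once the tail condition $a_n \geq 2$ is secured; every other ingredient is a direct invocation of Theorem~\ref{t1}.
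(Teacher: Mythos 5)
Your proof is correct and takes essentially the same route as the paper: both secure a tail where $a_n \geq 2$, observe that $a_{n+1} \geq a_n - 1$ must then hold for infinitely many $n$ (the paper asserts this directly from the positivity of the partial quotients; your descent-to-a-contradiction argument is just that assertion made explicit), and invoke Theorem \ref{t1} at each such index. Your closing check that the resulting fractions are pairwise distinct (via unbounded denominators) is a detail the paper leaves implicit, and a welcome one, but it does not change the approach.
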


\begin{proof}
Let $[a_0 ; a_1 , a_2 , \dots]$ be the regular continued fraction expansion of $x$. By hypothesis, we have $a_n \geq 2$ for any sufficiently large $n$. On the other hand, since ${(a_n)}_{n \in \N_0}$ is a sequence of positive integers, we have for infinitely many $n \in \N_0$: $a_{n + 1} ~\geq~ a_n - 1$. Consequently, we have for infinitely many $n \in \N_0$:
$$
a_{n + 1} ~\geq~ a_n - 1 ~\geq~ 1 .
$$
It follows from Theorem \ref{t1} that for any such $n$, the rational approximation $x \approx [a_0 ; a_1 , \dots ,$ $a_{n - 1} , a_n - 1]$ is intelligent but it is not a regular continued fraction convergent of $x$. This confirms the corollary.
\end{proof}

The following theorem provides another category of intelligent rational approximations of some irrational numbers that do not appear in their regular continued fraction convergents. 

\begin{thm}\label{t2}
Let $x > 1$ be an irrational number and let $[a_0 ; a_1 , a_2 , \dots]$ be its regular continued fraction expansion. Suppose that for some $n \in \N_0$, we have:
$$
2 ~\leq~ a_{n + 1} ~\leq~ a_n + 1 . 
$$
Then the rational approximation $x \approx [a_0 ; a_1 , a_2 , \dots , a_{n - 1} , a_n + 1]$ is intelligent, but it is not a regular continued fraction convergent of $x$. 
\end{thm}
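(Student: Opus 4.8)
The plan is to run the proof exactly parallel to Theorem \ref{t1}, but now building the first \emph{intermediate} fraction (semiconvergent) rather than the preceding convergent, and to reduce everything to the criterion of Proposition \ref{pp1}: since $x>0$ and the numerator/denominator of the approximation are positive, $x \simeq \frac{p'}{q'}$ is intelligent precisely when $\left|x - \frac{p'}{q'}\right| \leq \frac{x}{p'q'}$. Write $\frac{p_k}{q_k} := [a_0,\dots,a_k]$ for the convergents of $x$, with the usual conventions $p_{-1}=1,\ q_{-1}=0,\ p_{-2}=0,\ q_{-2}=1$, and set $\frac{p'}{q'} := [a_0,\dots,a_{n-1},a_n+1]$. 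The first step is to read off from the recurrences $p_k = a_k p_{k-1}+p_{k-2}$, $q_k = a_k q_{k-1}+q_{k-2}$ that $p' = p_n + p_{n-1}$ and $q' = q_n + q_{n-1}$, so $\frac{p'}{q'}$ is the first intermediate fraction between $\frac{p_{n-1}}{q_{n-1}}$ and $\frac{p_n}{q_n}$; in particular $|p_n q' - p' q_n| = |p_n q_{n-1} - p_{n-1}q_n| = 1$. Because the hypothesis forces $a_{n+1}\geq 2$, this first intermediate fraction is a \emph{proper} semiconvergent and is not the convergent $\frac{p_{n+1}}{q_{n+1}}$ (which would require $a_{n+1}=1$, as $[a_0,\dots,a_n,1]=[a_0,\dots,a_{n-1},a_n+1]$); this disposes of the ``not a convergent'' claim.

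For the intelligence, let $t := [a_{n+1},a_{n+2},\dots]$ be the $(n+1)$-st complete quotient, so that $x = \frac{t p_n + p_{n-1}}{t q_n + q_{n-1}}$. Substituting $p_{n-1}=p'-p_n$, $q_{n-1}=q'-q_n$ and putting $u := t-1$, I would rewrite this as $x = \frac{u p_n + p'}{u q_n + q'}$. A short computation using $|p_n q' - p' q_n|=1$ then yields $\left|x - \frac{p'}{q'}\right| = \frac{u}{q'(u q_n + q')}$, and replacing $u q_n + q' = (u p_n + p')/x$ converts this to $\left|x - \frac{p'}{q'}\right| = \frac{u x}{q'(u p_n + p')}$ (here $u = t-1 > 1$ since $t > a_{n+1}\geq 2$).

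Feeding this into the criterion, intelligence is equivalent to $\frac{ux}{q'(up_n+p')} \leq \frac{x}{p'q'}$, which after cancelling the positive common factors becomes $(u-1)p' \leq u p_n$; using $p'=p_n+p_{n-1}$ this collapses to the clean inequality $(u-1)p_{n-1} \leq p_n$, i.e. $u-1 \leq \frac{p_n}{p_{n-1}}$. The last step is to verify it, and this is where both hypotheses are spent. Since $t = a_{n+1} + [0,a_{n+2},\dots] < a_{n+1}+1$, the bound $a_{n+1}\leq a_n+1$ gives $u-1 = t-2 < a_{n+1}-1 \leq a_n$; on the other hand $\frac{p_n}{p_{n-1}} = a_n + \frac{p_{n-2}}{p_{n-1}} \geq a_n$, so $u-1 < a_n \leq \frac{p_n}{p_{n-1}}$ and the inequality holds with room to spare.

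I do not expect a genuine obstacle, only bookkeeping: the real content is identifying $\frac{p'}{q'}$ as the first semiconvergent, so that the determinant is still $\pm 1$ and the substitution $x = \frac{u p_n + p'}{u q_n + q'}$ is available. The two hypotheses play distinct and transparent roles, which is worth flagging in the write-up: $a_{n+1}\geq 2$ is exactly what keeps $\frac{p'}{q'}$ off the list of convergents (for $a_{n+1}=1$ it degenerates to $\frac{p_{n+1}}{q_{n+1}}$), while $a_{n+1}\leq a_n+1$ is exactly what forces $t < a_n+2$, which is what makes the final estimate $u-1 < a_n \leq \frac{p_n}{p_{n-1}}$ close. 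The degenerate index $n=0$ needs no separate case here, being absorbed by the conventions $p_{-1}=1,\ q_{-1}=0,\ p_{-2}=0$.
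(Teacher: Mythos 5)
Your proof is correct, and it follows the same master plan as the paper's: reduce intelligence to the criterion of Proposition \ref{pp1}, write $x$ as a M\"obius transformation involving the target fraction and a tail parameter, use the unit determinant to get an exact formula for the error, and spend the hypothesis $2 \leq a_{n+1} \leq a_n+1$ on a final inequality between numerators. The bookkeeping, however, is genuinely different. The paper writes $x = [a_0,\dots,a_{n-1},a_n+1,y]$ with the \emph{negative} entry $y = [0,-1,a_{n+1},a_{n+2},\dots] < -1$, so that $\frac{p_{n-1}}{q_{n-1}}$ and $\frac{p'}{q'}$ act as consecutive convergents, and its crux is the estimate $|p_{n-1} + p'y| > p'$, obtained from $y < -1 - \frac{1}{a_n+1}$ together with $p' \geq (a_n+1)p_{n-1}$. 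You stay entirely within positive quantities: the semiconvergent identities $p' = p_n + p_{n-1}$, $q' = q_n + q_{n-1}$ give the determinant $|p_n q' - p' q_n| = 1$, the substitution $u = t-1$ (with $t$ the complete quotient) gives $x = \frac{up_n+p'}{uq_n+q'}$, and the crux becomes $(u-1)p_{n-1} \leq p_n$, settled by $u - 1 < a_n \leq \frac{p_n}{p_{n-1}}$; both inequalities are equivalent under the change of variable $y = -\frac{t}{t-1}$, so the two proofs are the same argument in different coordinates. What your route buys: no continued fractions with negative entries (a device the paper uses without justification), and the conventions $p_{-1}=1$, $q_{-1}=0$, $p_{-2}=0$ absorb $n=0$, eliminating the paper's separate trivial case. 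What the paper's route buys: its proof is verbatim parallel to its proof of Theorem \ref{t1}, where the analogous $y = [0,1,a_{n+1},\dots]$ is positive, so the two theorems read as one argument with a sign flipped. One point worth adding to your write-up: Proposition \ref{pp1} requires admissibility, i.e.\ $p'q' \neq 1$; here this is automatic, since $x > 1$ gives $a_0 \geq 1$, hence $p'q' = a_0 + 1 \geq 2$ when $n = 0$ and $q' \geq 2$ when $n \geq 1$ --- in contrast with Theorem \ref{t1}, where the paper's footnote notes that the case $n=0$ can genuinely produce an inadmissible approximation.
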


\begin{proof}
Because $[a_0 ; a_1 , a_2 , \dots , a_{n - 1} , a_n + 1] = [a_0 ; a_1 , a_2 , \dots , a_{n - 1} , a_n , 1]$ and $a_{n + 1} \geq 2$, then $[a_0 ; a_1 , a_2 , \dots , a_{n - 1} , a_n + 1]$ is not a regular continued fraction convergent of $x$. Now, let us show that the rational approximation $x \approx [a_0 ; a_1 , a_2 , \dots , a_{n - 1} , a_n + 1]$ is intelligent. To do so, we distinguish two cases. \\[1mm]
\textbullet{} \underline{If $n = 0$:} \\
In this case, we have to show that the approximation $x \approx a_0 + 1$ is intelligent. We have:
$$
x ~=~ a_0 + \frac{1}{a_1 + {}_{\ddots}} ~\geq~ a_0 + \frac{1}{a_1 + 1} ~\geq~ a_0 + \frac{1}{a_0 + 2}
$$
(because in this case, we have by hypothesis $a_1 \leq a_0 + 1$). It follows that:
\begin{align*}
\vert x - (a_0 + 1)\vert ~=~ a_0 + 1 - x ~\leq~ (a_0 + 1) - \left(a_0 + \frac{1}{a_0 + 2}\right) & = \frac{a_0 + 1}{a_0 + 2} \\
& = \frac{1}{a_0 + 1} \left(a_0 + \frac{1}{a_0 + 2}\right) \\
& \leq \frac{x}{a_0 + 1} ,
\end{align*}
which shows (according to Proposition \ref{pp1}) that $x \approx a_0 + 1$ is an intelligent approximation of $x$ (in the rational model). \\[1mm]
\textbullet{} \underline{If $n \geq 1$:} \\
Let $\frac{p}{q}$ and $\frac{p'}{q'}$ (where $p , q , p' , q' \in \N$) be the irreducible rational fractions defined by:
\begin{align*}
\frac{p}{q} & := [a_0 ; a_1 , a_2 , \dots , a_{n - 1}] , \\[2mm]
\frac{p'}{q'} & := [a_0 ; a_1 , a_2 , \dots , a_{n - 1} , a_n + 1] 
\end{align*} 
and let
$$
y ~:=~ [0 ; -1 , a_{n + 1} , a_{n + 2} , \dots] ~=~ \frac{1}{-1 + \frac{1}{a_{n + 1} + \frac{1}{a_{n + 2} + {}_{\ddots}}}} 
$$
(remark that $y < -1$). We can express $x$ in terms of $y$ as follows:
\begin{align*}
x & := [a_0 ; a_1 , a_2 , \dots] \\
& = [a_0 ; a_1 , a_2 , \dots , a_{n - 1} , a_n + 1 , y] .
\end{align*}
It follows, according to the elementary properties of the regular continued fractions (see \linebreak \cite[Chapter 1]{khi}), that:
\begin{equation}\label{eqf1}
x ~=~ \frac{p + p' y}{q + q' y} .
\end{equation}
Using this last formula, we have:
$$
\left\vert x - \frac{p'}{q'}\right\vert ~=~ \left\vert\frac{p + p' y}{q + q' y} - \frac{p'}{q'}\right\vert ~=~ \frac{|p q' - p' q|}{q' |q + q' y|} ~=~ \frac{1}{q' |q + q' y|}
$$
(because $|p q' - p' q| = 1$, since $\frac{p}{q}$ and $\frac{p'}{q'}$ are two consecutive convergents of a real number). Consequently, we have (according to \eqref{eqf1}):
\begin{equation}\label{eqf2}
\left\vert x - \frac{p'}{q'}\right\vert ~=~ \frac{x}{q' |p + p' y|}
\end{equation}
On the other hand, from the definition of $y$ and the hypothesis of the theorem, we have:
$$
1 + \frac{1}{y} ~=~ \frac{1}{a_{n + 1} + {}_{\ddots}} ~>~ \frac{1}{a_{n + 1} + 1} ~\geq~ \frac{1}{a_n + 2} ,
$$
which gives:
\begin{equation}\label{eqf3}
y ~<~ -1 - \frac{1}{a_n + 1} .
\end{equation}
Next, by setting $\frac{p_{n - 2}}{q_{n - 2}}$ (where $p_{n - 2} , q_{n - 2} \in \N_0$) the irreducible rational fraction which is equal to $[a_0 ; a_1 , \dots , a_{n - 2}]$ \big(with the convention $(p_{n - 2} , q_{n - 2}) = (1 , 0)$ if $n = 1$\big), we have (according to the elementary properties of the regular continued fractions):
$$
p' ~=~ (a_n + 1) p + p_{n - 2} ~\geq~ (a_n + 1) p .
$$  
Thus
$$
\frac{p}{p'} ~\leq~ \frac{1}{a_n + 1} ~<~ - y - 1 ~~~~~~~~~~ \text{(according to \eqref{eqf3})} .
$$
Hence $\frac{p}{p'} + y < -1$; that is $p + p' y < - p'$. Thus
$$
|p + p' y| ~>~ p' .
$$
By inserting this in \eqref{eqf2}, we finally obtain:
$$
\left\vert x - \frac{p'}{q'}\right\vert ~<~ \frac{x}{p' q'} ,
$$
which shows (according to Proposition \ref{pp1}) that the rational approximation $x \approx \frac{p'}{q'}$ \linebreak $= [a_0 ; a_1 , \dots , a_{n - 1} ,$ $a_n + 1]$ is intelligent. This completes the proof of the theorem.
\end{proof}

\subsection{Applications of Theorems \ref{t1} and \ref{t2} to $\pi$, $e$, and various other real numbers}

\subsubsection*{For the number $\pi$}

The first applications of Theorem \ref{t1} for $\pi = [3 ; 7 , 15 , 1 , 292 , 1 , 1 , 1 , 2 , \dots]$ show that the two rational approximations $\pi \approx 2 ~(= [2])$ and $\pi \approx \frac{19}{6} ~(= [3 , 6])$ are both intelligent but neither of them is a regular continued fraction convergent of $\pi$. On the other hand, an application of Theorem \ref{t2} for $\pi$ shows that the rational approximation $\pi \approx \frac{521030}{165849} ~(=[3 ; 7 , 15 , 1 , 292 , 1 , 1 , 2])$ is intelligent but it is not a regular continued fraction convergent of $\pi$. \\
For the number $\pi$, we propose the following open problem:

\begin{leftbar}
\noindent\textbf{An open problem.} Is there a finite or an infinite number of intelligent rational approximations of $\pi$ that do not appear in its regular continued fraction convergents? 
\end{leftbar}

\subsubsection*{For the number $e$}
For the number $e = [2 ; 1 , 2 , 1 , 1 , 4 , 1 , \dots , 1 , 2 k , 1 , \dots]$, it is clear that Theorem \ref{t1} can be applied in only one way\footnote[2]{Note that the approximation $e \approx 1$ (obtained from the application of Theorem \ref{t1} for $n = 0$) is inadmissible because its logarithmic size is zero.} which gives that the rational approximation $e \approx \frac{5}{2} ~(= [2 , 1 , 1])$ is intelligent but it is not a regular continued fraction convergent of $e$. Similarly, Theorem \ref{t2} can also be applied in only one way and gives the same result (since $[2 , 2] = [2 , 1 , 1] = \frac{5}{2}$).

By direct computation, we find that the approximations $e \approx \frac{38}{14}$ and $e \approx \frac{386}{142}$ are also intelligent but they respectively reduce to the two regular continued fraction convergents $\frac{19}{7}$ and $\frac{193}{71}$ of $e$. This leads us to propose the following conjecture:
\begin{leftbar}
\noindent\textbf{Conjecture.} The approximation $e \approx \frac{5}{2}$ is the only intelligent rational approximation of $e$ which cannot be reduced to one of its regular continued fraction convergents.
\end{leftbar}

\subsubsection*{For the number $\sqrt{2}$}

The applications of Theorem \ref{t1} for $\sqrt{2} = [1 ; \overline{2}]$ show that for all $n \in \N_0$, the rational approximation $\sqrt{2} \approx [1 ; \underbrace{2 , 2 , \dots , 2}_{n \text{ times}} , 1]$ is intelligent but it is not a regular continued fraction convergent of $\sqrt{2}$. Actually, it is easy to show that for any positive integer $n$, we have:
$$
[1 ; \underbrace{2 , 2 , \dots , 2}_{n \text{ times}} , 1] ~=~ \frac{2}{[1 ; \underbrace{2 , 2 , \dots , 2}_{n \text{ times}}]} .
$$     
So for any regular continued fraction convergent $r$ of $\sqrt{2}$, the rational approximation $\sqrt{2} \approx \frac{2}{r}$ is intelligent but it is not a regular continued fraction convergent of $\sqrt{2}$. Consequently, the number $\sqrt{2}$ has infinitely many intelligent rational approximations outside its regular continued fraction convergents.

The applications of Theorem \ref{t2} for $\sqrt{2}$ essentially give the same results because we have for any $n \in \N_0$:
$$
[1 ; \underbrace{2 , 2 , \dots , 2}_{n \text{ times}} , 3] ~=~ [1 ; \underbrace{2 , 2 , \dots , 2}_{(n + 1) \text{ times}} , 1] .
$$ 

For the number $\sqrt{2}$, we propose the following conjecture:
\begin{leftbar}
\noindent\textbf{Conjecture.} Any intelligent rational approximation of $\sqrt{2}$ has one of the two forms: $r_n$ or $\frac{2}{r_n}$ ($n \in \N_0$), where $r_n$ denotes the $n$\textsuperscript{th} regular continued fraction convergent of $\sqrt{2}$.
\end{leftbar}

\subsubsection*{For the number $\sqrt{5}$} 

Let ${(F_n)}_{n \in \N_0}$ and ${(L_n)}_{n \in \N_0}$ denote respectively the usual Fibonacci and Lucas sequences, which are defined by:
\[
\begin{cases}
F_0 = 0 ~,~ F_1 = 1 , \\
F_{n + 2} = F_n + F_{n + 1} ~~ (\forall n \in \N_0)
\end{cases} \qquad \text{and} \qquad
\begin{cases}
L_0 = 2 ~,~ L_1 = 1 , \\
L_{n + 2} = L_n + L_{n + 1} ~~ (\forall n \in \N_0)
\end{cases} .
\] 
It is easy to show that we have $\mathrm{gcd}(F_n , L_n) = 2$ if $n \equiv 0 ~(\bmod~ 3)$ and $\mathrm{gcd}(F_n , L_n) = 1$ if $n \not\equiv 0 ~(\bmod~ 3)$. The known fact that $\lim_{n \rightarrow + \infty} \frac{L_n}{F_n} = \sqrt{5}$ allows us to consider the fractions $\frac{L_n}{F_n}$ ($n \geq 2$) as rational approximations of the number $\sqrt{5}$. Besides, those rational approximations are reducible if and only if $n \equiv 0 ~(\bmod~ 3)$. In that case, the fraction $\frac{L_n}{F_n}$ reduces to $\frac{L_n/2}{F_n/2}$ and then becomes simpler. In fact, the fractions $\frac{L_n/2}{F_n/2}$ for $n \equiv 0 ~(\bmod~ 3)$ (i.e., the fractions $\frac{L_{3 n}/2}{F_{3 n}/2}$ for $n \geq 1$) are precisely the regular continued fraction convergents of $\sqrt{5}$. On the other hand, by calculations, we can easily show that for any $n \in \N$, the approximation $\sqrt{5} \approx \frac{L_n}{F_n}$ is intelligent (in the rational model); so the rational approximations $\sqrt{5} \approx \frac{L_{3 n + 1}}{F_{3 n + 1}}$ ($n \geq 1$) and $\sqrt{5} \approx \frac{L_{3 n + 2}}{F_{3 n + 2}}$ ($n \geq 1$) are all intelligent but none of them is a regular continued fraction convergent\footnote[3]{We can show that the equations $\frac{L_{3 n + 1}}{F_{3 n + 1}} = \frac{L_{3 m}}{F_{3 m}}$ and $\frac{L_{3 n + 2}}{F_{3 n + 2}} = \frac{L_{3 m}}{F_{3 m}}$ are impossible for $n , m \in \N$.} of $\sqrt{5}$. Consequently, the number $\sqrt{5}$ has infinitely many intelligent rational approximations outside its regular continued fraction convergents. Notably, the same results can be recovered using Theorems \ref{t1} and \ref{t2}. Indeed, because we have $\sqrt{5} = [2 ; \overline{4}]$, Theorem \ref{t1} applies and shows that for any $n \in \N_0$, the rational approximation $\sqrt{5} \approx [2 ; \underbrace{4 , 4 ,\dots , 4}_{n \text{ times}} , 3] = \frac{L_{3 n + 4}}{F_{3 n + 4}}$ is intelligent but it is not a regular continued fraction convergent of $\sqrt{5}$. Also, Theorem \ref{t2} applies and shows that for any $n \in \N_0$, the rational approximation $\sqrt{5} \approx [2 ; \underbrace{4 , 4 , \dots , 4}_{n \text{ times}} , 5] = \frac{L_{3 n + 5}}{F_{3 n + 5}}$ is intelligent but it is not a regular continued fraction convergent of $\sqrt{5}$.

For the number $\sqrt{5}$, we propose the following conjecture:
\begin{leftbar}
\noindent\textbf{Conjecture:} Every intelligent rational approximation of $\sqrt{5}$ has one of the two forms: \linebreak $\frac{L_{3 n}/2}{F_{3 n}/2}$ or $\frac{L_n}{F_n}$ ($n \in \N$). 
\end{leftbar}        

\subsection{Irrational numbers with bounded or unbounded intelligence measure in the rational model}

Irrational numbers are of different natures in terms of the intelligence of their rational approximations. An important class of them does not possess ``very intelligent'' rational approximations! The result below shows that the irrational numbers for which the set of the measure of intelligence of their rational approximations is unbounded are exactly the Liouville numbers. First, let us recall the definition of the Liouville numbers:

\begin{defi}[see \cite{bug}]
An irrational number $x$ is called \emph{a Liouville number} if for every positive integer $n$, there exist integers $a , b$, with $b > 1$, such that:
$$
\left|x - \frac{a}{b}\right| ~<~ \dfrac{1}{b^n} .
$$
\end{defi} 

We have the following:

\begin{thm}\label{t3}
Let $x$ be an irrational number. Then the set of the measures of intelligence of all the (admissible) rational approximations of $x$ is unbounded if and only if $x$ is a Liouville number.
\end{thm}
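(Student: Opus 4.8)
The plan is to prove both implications by unfolding the definition of $\mu$ in the rational model into a single explicit Diophantine inequality and then matching it against the defining condition of a Liouville number. Since $\log|pq| > 0$ for an admissible approximation, an approximation $x \simeq \frac{p}{q}$ (with $p,q \in \Z^*$, $|pq| \neq 1$) satisfies $\mu\!\left(x \simeq \frac{p}{q}\right) > M$ if and only if $\left|x - \frac{p}{q}\right| < \frac{|x|}{|pq|^{M}}$. Throughout I may normalise $q > 0$ by replacing $(p,q)$ with $(-p,-q)$, which changes neither the value $\frac{p}{q}$ nor the size $|pq|$. The whole proof then becomes a matter of comparing exponents.

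For the implication \emph{$x$ Liouville $\Rightarrow$ the measures are unbounded}, I would feed the Liouville witnesses straight into this formula. For each $n \geq 1$ pick integers $a_n, b_n$ with $b_n \geq 2$ and $\left|x - \frac{a_n}{b_n}\right| < b_n^{-n}$. Since $x \neq 0$ is fixed, $a_n = 0$ is impossible once $2^{n} > 1/|x|$, so for all large $n$ the pair $(a_n,b_n)$ lies in $\Z^* \times \Z^*$ and is admissible. I then bound the numerator of $\mu$ from below by $\log|x| + n\log b_n$ (from $\left|x-\frac{a_n}{b_n}\right| < b_n^{-n}$) and its denominator from above by $\log(|x|+1) + 2\log b_n$ (from $|a_n| < (|x|+1)b_n$). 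Writing $t_n = \log b_n \geq \log 2$, the quotient is at least $\frac{\log|x| + n t_n}{\log(|x|+1) + 2 t_n} \geq \frac{n}{2 + \log(|x|+1)/\log 2} - C$, which tends to $+\infty$; hence the set of measures is unbounded above.

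For the converse \emph{measures unbounded $\Rightarrow$ $x$ Liouville}, fix $m \in \N^*$; I must produce $\frac{a}{b}$ with $b > 1$ and $\left|x - \frac{a}{b}\right| < b^{-m}$. Picking an approximation with $\mu > M$ (for $M$ large, fixed below) and normalising $q > 0$ gives $\left|x - \frac{p}{q}\right| < \frac{|x|}{|pq|^{M}} \leq \frac{|x|}{q^{M}}$, using $|pq| \geq q$. With $b := q$ and $a := p$, the target $\left|x - \frac{a}{b}\right| < b^{-m}$ follows provided $q \geq 2$ and $|x| \leq q^{M-m}$, the latter being guaranteed by $q \geq 2$ as soon as $2^{M-m} \geq |x|$. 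The main obstacle is exactly the requirement $q \geq 2$: a priori the most intelligent approximations could be integers ($q = 1$), which are useless as Liouville witnesses. I would remove this obstruction by observing that integer approximations have uniformly bounded measure. Indeed, put $\delta := \mathrm{dist}(x,\Z) > 0$ (positive since $x$ is irrational); then for any integer $p$ with $|p| \geq 2$ we have $|x - p| \geq \delta$, so the numerator $\log|x| - \log|x-p|$ is at most $\log(|x|/\delta)$ while the denominator $\log|p|$ is at least $\log 2$, whence $\mu(x \simeq p) \leq C(x)$ for a constant $C(x)$ depending only on $x$. Consequently any approximation with $\mu > C(x)$ must have $q \geq 2$. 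Fixing $M$ large enough that both $M > C(x)$ and $2^{M-m} \geq |x|$, the chosen witness has $q \geq 2$ and yields the desired inequality at level $m$; as $m$ was arbitrary, $x$ is a Liouville number.

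A final remark on reading the statement: here \emph{unbounded} means unbounded above, which is the regime of ``very intelligent'' approximations, and both arguments above concern the supremum (the measures are in fact bounded below, but this plays no role). The only genuinely delicate point is the one isolated in the converse direction, namely controlling $\log|pq|$ by $\log q$ and excluding the integer case; all the remaining estimates are routine manipulations of the logarithms appearing in the definition of $\mu$.
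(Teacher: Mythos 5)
Your proof is correct and follows essentially the same route as the paper: the forward direction feeds the Liouville witnesses into $\mu$ with the same numerator/denominator estimates, and your converse rests on exactly the paper's two ingredients, namely $|pq| \geq q$ to pass from the size to the denominator, and $\mathrm{dist}(x,\Z) > 0$ to show integer approximations ($q = 1$) have uniformly bounded measure. The only cosmetic difference is that the paper proves the converse contrapositively (non-Liouville $\Rightarrow$ the uniform bound $\mu \leq \frac{|x|}{\log 2} + k$), whereas you extract Liouville witnesses directly from approximations with $\mu > M$, choosing $M$ in terms of the target exponent $m$.
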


\begin{proof}~\\
\textbullet{} Suppose that $x$ is a Liouville number. Then for any positive integer $n$, there exist $a_n , b_n \in \Z^*$ (with $b_n \geq 2$) such that:
\begin{equation}\label{eqq3}
\left|x - \frac{a_n}{b_n}\right| ~<~ \frac{1}{b_n^n} .
\end{equation}
From \eqref{eqq3}, we derive that for all $n > 1$, we have: $|\frac{a_n}{b_n}| < |x| + \frac{1}{b_n^n} < |x| + 1$ and then we have $\log|\frac{a_n}{b_n}| < \log(|x| + 1) \leq |x|$; hence
\begin{equation}\label{eqq4}
\log\left|\frac{a_n}{b_n}\right| ~<~ |x| .
\end{equation}
Next, using \eqref{eqq3} and \eqref{eqq4}, we have:
\begin{align*}
\mu\left(x \approx \frac{a_n}{b_n}\right) & = \frac{\log|x| - \log\left|x - \frac{a_n}{b_n}\right|}{\log|a_n b_n |} \\[2mm]
& > \frac{\log|x| + n \log(b_n)}{\log|a_n b_n|} ~~~~~~~~~~ \text{(according to \eqref{eqq3})} \\[2mm]
& = \frac{\log|x| + n \log(b_n)}{\log|\frac{a_n}{b_n}| + 2 \log(b_n)} \\[2mm]
& > \frac{\log|x| + n \log(b_n)}{|x| + 2 \log(b_n)} ~~~~~~~~~~ \text{(according to \eqref{eqq4})} \\[2mm]
& = \frac{n + \frac{\log|x|}{\log(b_n)}}{2 + \frac{|x|}{\log(b_n)}} \\[2mm]
& \geq \frac{n + \frac{\log|x|}{\log(b_n)}}{2 + \frac{|x|}{\log{2}}} ~~~~~~~~~~ \text{(since $b_n \geq 2$)} .
\end{align*}
But because ${(\frac{\log|x|}{\log(b_n)})}_n$ is a bounded sequence, we have $\lim_{n \rightarrow + \infty} \frac{n + \frac{\log|x|}{\log(b_n)}}{2 + \frac{|x|}{\log{2}}} = + \infty$. Thus
$$
\lim_{n \rightarrow + \infty} \mu\left(x \approx \frac{a_n}{b_n}\right) ~=~ + \infty ,
$$
concluding that the set of the measures of intelligence of all the (admissible) rational approximations of $x$ is unbounded. \\[1mm]
\textbullet{} Conversely, suppose that $x$ is not a Liouville number. Then there exists a positive integer $k$ such that for any rational approximation $\frac{a}{b}$ of $x$ (with $a, b \in \Z^*$ and $b \geq 2$), we have:
$$
\left|x - \frac{a}{b}\right| ~\geq~ \frac{1}{b^k} .
$$
This gives:
\begin{align*}
\mu\left(x \approx \frac{a}{b}\right) & = \frac{\log|x| - \log\left|x - \frac{a}{b}\right|}{\log|a b|} \\[2mm]
& \leq \frac{\log|x| + k \log{b}}{\log|a b|} \\[2mm]
& = \frac{\log|x|}{\log|a b|} + \frac{k \log{b}}{\log|a b|} \\[2mm]
& \leq \frac{|x|}{\log{2}} + k ,
\end{align*}
showing that any rational approximation of $x$, with denominator $\geq 2$, has a measure of intelligence bounded (above) by $(\frac{|x|}{\log{2}} + k)$. On the other hand, it is immediate that the measure of intelligence of any rational approximation of $x$, with denominator $1$ and numerator $\not\in \{0 , 1 , -1\}$ (i.e., any admissible integer approximation of $x$), is bounded above by $\frac{|x| - \log{d(x , \Z)}}{\log{2}}$ (where $d$ \linebreak denotes the usual distance on $\R$). This implies the required result and completes this proof.  
\end{proof}

We derive from Theorem \ref{t3} the following corollary:

\begin{coll}
Let $\alpha$ be an irrational algebraic number. Then the set of the measures of intelligence of all the {\rm(}admissible{\rm)} rational approximations of $\alpha$ is bounded.
\end{coll}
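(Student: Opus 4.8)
The plan is to reduce the corollary immediately to Theorem \ref{t3} together with the classical fact that irrational algebraic numbers are not Liouville numbers. Theorem \ref{t3} asserts that the set of measures of intelligence of all admissible rational approximations of an irrational number is unbounded if and only if that number is a Liouville number. Hence, by the contrapositive of one direction of Theorem \ref{t3}, it suffices to establish that an irrational algebraic number $\alpha$ is \emph{not} a Liouville number; the boundedness of the set of measures of intelligence then follows directly.

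First I would recall Liouville's theorem on the rational approximation of algebraic numbers (see \cite[Chapter 1]{bug}): if $\alpha$ is an algebraic number of degree $d \geq 2$, then there exists a positive constant $c$ (depending on $\alpha$) such that for every rational number $\frac{a}{b}$ (with $a , b \in \Z$, $b \geq 1$) one has
$$
\left|\alpha - \frac{a}{b}\right| ~>~ \frac{c}{b^d} .
$$
The proof of this inequality is standard: one applies the mean value theorem to the minimal polynomial $P$ of $\alpha$ between $\alpha$ and $\frac{a}{b}$, using that $P(\frac{a}{b}) \neq 0$ (so $|P(\frac{a}{b})| \geq b^{-d}$ since $b^d P(\frac{a}{b})$ is a nonzero integer) and that $P'$ is bounded on a neighbourhood of $\alpha$.

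Next I would deduce that $\alpha$ cannot satisfy the defining property of a Liouville number. Indeed, choosing any integer $n > d$ with $2^{n - d} > \frac{1}{c}$, the Liouville inequality gives, for every rational $\frac{a}{b}$ with $b \geq 2$,
$$
\left|\alpha - \frac{a}{b}\right| ~>~ \frac{c}{b^d} ~\geq~ \frac{c}{b^{n} / b^{\,n - d}} ~=~ \frac{c \, b^{\,n - d}}{b^{n}} ~\geq~ \frac{c \cdot 2^{\,n - d}}{b^{n}} ~>~ \frac{1}{b^{n}} ,
$$
which contradicts the requirement that for \emph{every} positive integer $n$ there be a rational $\frac{a}{b}$ with $b > 1$ and $|\alpha - \frac{a}{b}| < b^{-n}$. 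Therefore $\alpha$ is not a Liouville number.

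Finally, applying Theorem \ref{t3} to the non-Liouville irrational number $\alpha$ yields that the set of measures of intelligence of all admissible rational approximations of $\alpha$ is bounded, which is precisely the assertion of the corollary. I do not expect any genuine obstacle here: the only nontrivial ingredient is Liouville's approximation inequality, which is entirely classical and can simply be cited; the rest is a direct invocation of the previously proved Theorem \ref{t3}.
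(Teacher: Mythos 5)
Your proof is correct and follows essentially the same route as the paper: invoke Liouville's approximation theorem to conclude that an irrational algebraic number is not a Liouville number, then apply Theorem \ref{t3}. The only difference is that you spell out the standard deduction from Liouville's inequality to the non-Liouville property, which the paper simply cites as known.
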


\begin{proof}
By Liouville's Diophantine approximation theorem (see \cite[§1]{bug}), $\alpha$ is not a Liouville number. The required result then immediately follows from Theorem \ref{t3}.
\end{proof}

\begin{rmk}
If an irrational real number $x$ is not a Liouville number then there exist $\kappa > 0$ and $c = c(x , \kappa) > 0$ such that for any rational number $\frac{a}{b}$ ($a , b \in \Z$, $b > 0$), we have:
$$
\left|x - \frac{a}{b}\right| ~\geq~ \frac{c(x , \kappa)}{b^{\kappa}} . 
$$
Such $\kappa$ (not necessarily integers) are called \emph{irrationality measures} of $x$. Besides, if $c(x , \kappa)$ can be explicitly computed, then $\kappa$ is called \emph{an effective irrationality measure} of $x$. \\
Many important transcendental numbers have been identified as non-Liouville numbers whose irrationality measures have been computed. Among these numbers, we list: $\pi , e , \log{2} , \log{3} , \zeta(3)$ (see \cite[§11.3, pp. 362-386]{bor}, \cite{mar}, \cite{rv}, \cite{sal1}, \cite{sal2}). Consequently, for each of these numbers, the set of the measures of intelligence of all its (admissible) rational approximations is bounded. In other words (roughly speaking):
\begin{leftbar}
\noindent None of the numbers $\pi , e , \log{2} , \log{3} , \zeta(3)$ admits a ``very intelligent'' rational approximation.
\end{leftbar}    
\end{rmk}

\section{An important open problem}

Given a model of approximation $\M$, let $\overline{\M}$ denote the closure of the image of $\M$ (with respect to the usual topology of $\R$). Numerical experiences suggest that for any $x \in \overline{\M} \setminus \M$, there exist intelligent approximations of $x$ in $\M$. However, proving or disproving this assertion seems difficult in general. The success achieved for the rational model (cf. above) relies on Dirichlet's approximation theorem, which itself follows from Dirichlet's pigeonhole principle (see \cite[§1]{bug}). Unfortunately, this elementary principle cannot be applied to other models. We therefore ask whether there exist other methods --based perhaps on density arguments in $\R$ or on the distribution modulo $1$ of real sequences-- that would allow one to settle the problem, even in particular cases.


\begin{thebibliography}{99}

\bibitem{bor}
{\sc J. M. Borwein} and {\sc P. B. Borwein}. {\it Pi and the AGM. A Study in Analytic Number Theory and Computational Complexity}, Wiley, New York, 1987. 

\bibitem{bug}
{\sc Y. Bugeaud}. {\it Approximation by algebraic numbers}, Cambridge Tracts in Mathematics, vol. {\bf 160}, Cambridge University Press, Cambridge, 2004.

\bibitem{khi}
{\sc A.Y. Khinchin}. {\it Continued fractions}, University of Chicago Press, 1964.

\bibitem{koch}
{\sc A. A. Kocha\'nski}. Observationes Cyclometricae ad facilitandam Praxin accomodatae, {\it Acta Eruditorum}, {\bf 4} (1685), p. 394-398.

\bibitem{mar}
{\sc R. Marcovecchio}. The Rhin-Viola Method for $\log{2}$, {\it Acta Arith}, {\bf 139} (2009), p. 147-184. 

\bibitem{rv}
{\sc G. Rhin \& C. Viola}. The Group Structure for $\zeta(3)$, {\it Acta Arith}, {\bf 97} (2001), p. 269-293.
\bibitem{sal1}

{\sc V. Kh. Salikhov}. On the Irrationality Measure of $\ln{3}$, {\it Dokl. Akad. Nauk}, {\bf 417} (2007), p. 753-755. Translation in {\it Dokl. Math}, {\bf 76} (2007), p. 955-957. 

\bibitem{sal2}
\idem. On the Irrationality Measure of pi, {\it Usp. Mat. Nauk}, {\bf 63} (2008), p. 163-164. English transl. in {\it Russ. Math. Surv}, {\bf 63} (2008), p. 570-572.

\end{thebibliography}
\end{document}